\newtheorem{satz}{Theorem}
\newtheorem{proposition}[satz]{Proposition}
\newtheorem{theorem}[satz]{Theorem}
\newtheorem{lemma}[satz]{Lemma}
\newtheorem{corollary}[satz]{Corollary}
\newtheorem{remark}[satz]{Remark}
\def\Z{\mathbb {Z}}
\def\F{\mathbb {F}}
\def\E{\mathsf{E}}
\def\I{{\cal I}}
\def\a{\alpha}
\def\d{\delta}
\def\({\big (}
\def\){\big )}
\def\G{\Gamma}
\def\le{\leqslant}
\def\ge{\geqslant}
\def\_phi{\varphi}
\def\eps{\varepsilon}
\def\Gr{{\mathbf G}}
\def\la{\lambda}
\def\T{\mathsf{T}}
\def\Q{\mathsf{Q}}
\def\SL{{\rm SL}}
\def\Aff{{\rm Aff}}
\def\Sid{\mathsf{Sid}}
\def\F{\mathbb {F}}
\def\R{{\mathbb R}}
\author{Shkredov I.D.}
\title{On multiplicative Chung--Diaconis--Graham
process
\footnote{This work is supported by the Russian Science Foundation under grant 19--11--00001.}
}
\date{}
\begin{document}
	\maketitle


\begin{center}
	Annotation.
\end{center}

{\it \small
    We study the lazy Markov chain on $\F_p$ defined as $X_{n+1}=X_n$ with probability $1/2$ and $X_{n+1}=f(X_n) \cdot \eps_{n+1}$, where $\eps_n$ are random variables distributed  uniformly on $\{ \gamma^{}, \gamma^{-1}\}$, $\gamma$ is a primitive root and $f(x) = \frac{x}{x-1}$ or $f(x)=\mathrm{ind} (x)$.   
    Then we show that 
    the mixing time of $X_n$ is $\exp(O(\log p / \log \log p))$. 
    Also, we obtain  an application to an additive--combinatorial question 
    concerning a certain  
    Sidon--type family of sets. 
}
\\

\section{Introduction}
\label{sec:introduction}

The Chung--Diaconis--Graham process \cite{CDG} is the random walk on $\F_p$ (or more generally on $\Z/n\Z$ for composite $n$) defined as 
\begin{equation}\label{f:CDG}
    X_{j+1} = a X_j + \eps_{j+1} \,, 
\end{equation}
where $a\in \F_p^*$  is a fixed residue
and the random variables $\eps_j$ are independent and identically distributed (in the original paper \cite{CDG} the variables $\eps_j$ were distributed uniformly on  $\{-1,0,1\}$ and 
$a=2$). This process was studied extensively, see papers 
\cite{CD},  \cite{CDG}---\cite{Hildebrand}
and so on. 
In our article we are interested in the following characteristic of $X_n$, which is called the {\it mixing time}.
The definition is 
$$
t_{mix}(\varepsilon) := \inf \left\{n ~:~
\max_{A \subseteq \F_p} \left| \mathrm{P} (X_n \in A) - \frac{|A|}{p} \right| \leq 
\varepsilon\right\} \,.
$$
Usually one takes a concrete value of  the parameter $\eps$, e.g., $\eps=1/4$ and below we will say about $t_{mix} := t_{mix} (1/4)$.
Simple random walk on $\F_p$ has the mixing time $t_{mix}$ of order $p^2$, see \cite{Peres} and it was shown in \cite{CDG} (also, see recent paper \cite{EV}) 
that the mixing time of process \eqref{f:CDG} is at most $O(\log p \cdot \log \log p)$.  
Hence the Chung--Diaconis--Graham process gives an example of a speedup phenomenon, i.e., a phenomenon of increasing the time of the convergence.  
In \cite{He} it was studied a more general non--linear version of the Chung–Diaconis–Graham process, defined as 
\begin{equation}\label{f:f_Markov}
    X_{j+1} = f(X_j) + \eps_{j+1} \,, 
\end{equation}
where $f$ is a bijection on $\F_p$. 
In particular, it was proved that for rational functions of bounded degree (defined correctly at poles, see \cite{He}) the mixing time is 
\begin{equation}\label{f:t_mix_He}
    t_{mix}(1/4) = O(p^{1+\eps}) \,, \quad \quad \quad  \forall \eps>0\,. 
\end{equation}
Perhaps, the right answer for  process \eqref{f:f_Markov} is $t_{mix} = O(\log p)$ but it was obtained in the only case $f(x) = 1/x$ for $x\neq 0$ and $f(0)=0$, see \cite{HPX}. 
The proof is based on $\SL_2 (\F_p)$--actions methods from paper \cite{BG}. 
In \cite{CD} it was asked whether other explicit examples of Markov chains with low mixing time could be provided.

Our paper is devoted to a multiplicative form of Chung--Diaconis--Graham process.
Multiplicative variants of the process were studied in \cite{Asci},  \cite{Hildebrand_mult}, \cite{Hildebrand_mult_m},  \cite{Kruglov} and in other papers. 
Consider the family of functions 
\begin{equation}\label{def:f_ab}
    f^{\alpha,\beta}_* (x) = \frac{x}{\alpha x + \beta} \,,
\end{equation}
where $\alpha, \beta \neq 0$. 
Most 
of 
our results below do not depend on  $\alpha, \beta$, so we will not write these  parameters in such cases. 
In Theorems \ref{t:main_intr}, \ref{t:main} we need $f^{\alpha,\beta}_* (x)$ be a bijection, 
so we put  $f^{\alpha,\beta}_* (-\beta/\alpha):= 1/\alpha$. 
In turn Theorems \ref{t:main_intr}, \ref{t:main} not depend on a particular choice of $(\alpha,\beta)$ and one can consider $\alpha =1$, $\beta=-1$, say, and write $f_* (x) := f^{1,-1}_* (x)$. 
Let us formulate a particular case of our main result. 

\begin{theorem}
    Let $p$ be a prime number and $\gamma \in \F_p^*$ be a primitive root.
    Also, let $\eps_{j}$ be the random variables distributed  uniformly on $\{ \gamma^{}, \gamma^{-1}\}$. 
    Consider the lazy Markov chain $0\neq  X_0,X_1,\dots, X_n, \dots$ defined by 
\[
    X_{j+1}=\left\{\begin{array}{ll}
f_* \left(X_{j}\right) \cdot \varepsilon_{j+1} & \text { with probability } 1 / 2\,, \\
X_{j} & \text { with probability } 1 / 2 \,.
\end{array}\right.
\]
    Then for any $c>0$ and any $n = c \exp(\log p/ \log \log p)$ one has 
\[
    \| P_n - U\| := \frac{1}{2} \max_{A \subseteq \F^*_p} \left| \mathrm{P} (X_n \in A) - \frac{|A|}{p-1} \right| \le e^{-O(c)} \,.
\]
The same is true for the chain $X_{j+1} = f_* \left(X_{j}\right) \cdot \varepsilon_{j+1}$, where $\eps_j$ denote the random variables distributed  uniformly on $\{ 1, \gamma^{-1}, \gamma\}$.  
\label{t:main_intr}
\end{theorem}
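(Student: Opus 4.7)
The plan is to bound the mixing time via Fourier analysis on the multiplicative group $\F_p^*$. Writing $\hat P_n(\chi) = \mathrm{E}[\chi(X_n)]$ for a non-trivial multiplicative character $\chi$, Plancherel gives
\[
4\|P_n - U\|^2 \le \sum_{\chi \neq 1} |\hat P_n(\chi)|^2,
\]
so it suffices to show that, after $n \asymp c\exp(\log p/\log\log p)$ steps, every non-trivial Fourier coefficient is at most $\exp(-\Omega(c))/\sqrt{p-1}$. The lazy Markov step yields the exact recursion
\[
\hat P_{n+1}(\chi) = \tfrac12 \hat P_n(\chi) + \tfrac12 \hat\mu(\chi)\,\mathrm{E}[\chi(f_*(X_n))],
\]
with $\hat\mu(\chi_k) = \cos(2\pi k/(p-1))$. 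The factor $\hat\mu$ contracts medium-frequency characters but is too close to $1$ on low frequencies, and the twisted sum $\mathrm{E}[\chi(f_*(X_n))]$ is not directly comparable to $\hat P_n(\chi)$ because $f_*$ is non-linear; a single step therefore does not give a uniform contraction.

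The remedy is to group several consecutive non-lazy steps: one has $X_{n+k} = T_k \cdots T_1(X_n)$ where each $T_j(x) = \eps_j f_*(x)$ is a Möbius transformation associated to the matrix $\bigl(\begin{smallmatrix}\eps_j & 0 \\ 1 & -1\end{smallmatrix}\bigr) \in \mathrm{GL}_2(\F_p)$. The chain is thus the image, under the orbit map $g \mapsto g(X_0)$, of a random walk on $\mathrm{PGL}_2(\F_p)$ generated by the two matrices $M_{\pm} = \bigl(\begin{smallmatrix}\gamma^{\pm 1} & 0 \\ 1 & -1\end{smallmatrix}\bigr)$. Bounding $|\hat P_n(\chi)|$ therefore reduces to estimating character sums over random orbits of such Möbius products, which one can attack with $\mathrm{PGL}_2(\F_p)$-growth / flattening results in the spirit of \cite{BG} and \cite{HPX}, coupled with quantitative sum--product and incidence estimates in $\F_p$ typical of Shkredov's toolbox.

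The main obstacle is the quantitative rate: obtaining $t_{mix} = \exp(O(\log p/\log\log p))$ rather than the $O(\log p)$ that a clean Bourgain--Gamburd argument would produce. The loss seems to come from an iterative flattening scheme in which each pass through a sum--product / incidence inequality decreases $\|P_n - U\|_2$ only by a polylogarithmic factor, so driving it from $O(1)$ down to $O(1/\sqrt{p})$ requires on the order of $\log p/\log\log p$ rounds, each paying a multiplicative cost in the step count. Carrying this iteration through while verifying the requisite non-concentration of the random walk on $\mathrm{PGL}_2(\F_p)$ (in particular that products of $M_{\pm}$ do not collapse into a proper subgroup or affine subvariety) will be the technical heart of the argument; the exponential decay in $c$ then follows by a standard amplification argument once a single mixing window is established, and the same proof handles the second chain since adding the value $\eps = 1$ only changes the laziness proportion.
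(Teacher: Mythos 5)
Your proposal is a programme, not a proof: the step you yourself call ``the technical heart'' --- verifying non-concentration of the products of $M_{\pm}$ and running an $L^2$-flattening iteration on $\mathrm{PGL}_2(\F_p)$ --- is exactly what is left undone, and it is far from routine here. Two concrete obstacles. First, your generating set contains the diagonal element $\mathrm{diag}(\gamma^{\pm1},1)$, which lies in a split torus of order $p-1$; escape from (cosets of) this torus is precisely the hard non-concentration input that Bourgain--Gamburd requires, and nothing in your sketch supplies it. The case treated in \cite{HPX} ($f(x)=1/x$ with additive steps) has generators of a very different shape (a unipotent and an involution), so the analogy does not transfer. Second, your Fourier setup is in tension with your group-theoretic reduction: multiplicative characters $\chi$ of $\F_p^*$ do not transform in any usable way under a M\"obius map, so the quantity $\mathrm{E}[\chi(f_*(X_n))]$ cannot be recoupled to $\hat P_n$; the Bourgain--Gamburd method would instead have to run through the representation theory (quasirandomness) of $\mathrm{PGL}_2(\F_p)$, which your one-dimensional character computation never reaches. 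Your heuristic for why the answer is $\exp(O(\log p/\log\log p))$ rather than $O(\log p)$ (``polylogarithmic loss per flattening round'') is not backed by any estimate and does not reflect where the loss actually occurs.

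For comparison, the paper avoids Fourier analysis and group expansion entirely. It views the non-lazy moves as a $2$-regular directed graph on $\F_p^*$ and lower-bounds its Cheeger constant, then invokes Chung's theorem for lazy chains on directed graphs (Theorem \ref{t:Chung}) to convert the isoperimetric bound into a mixing-time bound. The Cheeger bound is obtained by decomposing an arbitrary $S\subseteq\F_p^*$ into geometric progressions with step $\gamma^2$ and counting, via the incidence/energy machinery of Proposition \ref{p:counting_collinear}, Lemma \ref{l:T_k-E(L)} and Theorems \ref{t:upper_T(AB)}, \ref{t:sol_Sidon}, the solutions of $a=f_*(b)$ with $a,b$ in such progressions; the rate $\exp(O(\log p/\log\log p))$ comes from the threshold $|Z|\ge p^{\delta}$ with $\delta\gg(\log\log p)^{-1}$ below which the energy estimate fails. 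Finally, your remark that the $\{1,\gamma^{-1},\gamma\}$ chain ``only changes the laziness proportion'' is too quick: that chain applies $f_*$ at every step even when $\eps=1$, so it is not lazy in the sense required by Theorem \ref{t:Chung}; the paper handles it by checking that the conjugated bijection $f_*(f_*^{-1}(\cdot)\gamma)$ is again of the form \eqref{def:f_ab}, following \cite{CD} and \cite{He}.
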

In other words, the mixing time of our Markov chain is $\exp(O(\log p/ \log \log p))$.
By a similar method we obtain the same bound for another chain with $f_* (x) = \mathrm{ind} (x)$ and for the chain  of form \eqref{f:f_Markov} with $f(x)=\exp(x)$, see Theorem \ref{t:ind,exp} and formulae \eqref{f:ind}, \eqref{f:exp} below.
As a byproduct we show that in the case $f(x)=x^2$ and $p \equiv 3 \pmod 4$ the mixing time of \eqref{f:f_Markov} is, actually, $O(p\log p)$, see Remark \ref{r:f(x)=x^2}.

Our 
approach 
is not analytical as in \cite{He} but it uses some methods from Additive Combinatorics and Incidence Geometry.
In particular, we apply  some results on  growth in the affine group $\Aff(\F_p)$.  The core of our article  has much more in common with papers \cite{BG}, \cite{s_Kloosterman} than with \cite{He} but we extensively use the general line of the proof from this 
paper. 
From additive--combinatorial point of view the main innovation is a series of asymptotic formulae for the incidences of points and lines, which were obtained via the action of  $\Aff(\F_p)$, see the beginning of section \ref{sec:proof}. The author hopes that such formulae 
are interesting in its own right. 
It is well--known see, e.g., \cite{BG}, \cite{Brendan_rich}, \cite{collinear}, \cite{RS_SL2}, \cite{s_asymptotic}, \cite{s_Kloosterman}, \cite{s_Sidon}, \cite{SdZ} that Incidence Geometry and the sum--product phenomenon sometimes work better than classical analytical methods and that is why it is possible to break 
the square--root barrier, which corresponds to natural bound \eqref{f:t_mix_He} (for details see Theorem \ref{t:Chung} and the proofs of Theorems \ref{t:main}, \ref{t:ind,exp}).

It turns out that the same method is applicable to a purely additive--combinatorial question on Sidon sets. 
Sidon sets is a classical subject of Combinatorial Number Theory,   see, e.g., survey \cite{Bryant}.  
Recall that a subset $S$ of an abelian  group $\Gr$ with the group operation $*$ is called $g$--Sidon set if for any $z\neq 1$ the equation $z=x*y^{-1}$, where $x,y\in S$ has at most $g$ solutions. If $g=1$, then we 
arrive to 
the classical definition of Sidon sets \cite{Sidon}. Having an arbitrary set $A\subseteq \Gr$, we write $\Sid^*(A)$ for size of the maximal (by cardinality) Sidon subset of the set $A$. 
It is known  \cite{KSS} (also, see \cite{Semchenkov}) that for any subset $A$ of our abelian group $\Gr$ the following 
estimate takes place 
$$
\Sid^*(A) \gg \sqrt{|A|}
$$
and Klurman and Pohoata \cite{Klurman} asked about possibility to improve the last bound, having {\it two} different operations on a {\it ring} $\Gr$. In \cite{s_Sidon} the author obtains

\begin{theorem}
	Let $A\subseteq \F$ be a set, where $\F = \R$ or $\F = \F_p$ (in the prime field  case suppose, in addition, that $|A|<\sqrt{p}$, say).
	Then there are some absolute constants $c>0$, $K\ge 1$ such that
\begin{equation}\label{f:Sidon_intr}
	\max \{ \Sid^{+}_K (A), \Sid_K^{\times}(A) \} \gg |A|^{1/2+c} \,.
\end{equation}
\label{t:Sidon_intr}
\end{theorem}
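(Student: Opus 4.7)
The plan is to argue by contradiction: suppose that for some small absolute $c>0$ and some $K\ge 1$ we have both $\Sid^+_K(A)\le |A|^{1/2+c}$ and $\Sid^\times_K(A)\le |A|^{1/2+c}$. The strategy is first to extract from each Sidon upper bound a \emph{lower} bound on a suitable energy of $A$, and then to derive a contradiction from a sum--product/incidence theorem that controls these two energies jointly.

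For the first step I would use the random--deletion argument that underlies the Koml\'os--Sulyok--Szemer\'edi lower bound $\Sid^*(A)\gg \sqrt{|A|}$, in its higher--moment form. Taking a random subset of $A$ at density $p$ and removing one point from every $(K+1)$-tuple of representations of a common nonzero difference, a routine first--moment computation yields
\[
\Sid^+_K(A)\;\gg\;\frac{|A|^{\,1+1/(2K+1)}}{E^+_{K+1}(A)^{1/(2K+1)}},
\]
where $E^+_{K+1}(A):=\sum_{z}r_{A-A}(z)^{K+1}$, and the same inequality multiplicatively. For $K=1$ the hypothesis then forces $E^+(A),\,E^\times(A)\gg |A|^{5/2-3c}$.

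In the second step I would invoke a sum--product estimate yielding an absolute $\delta>0$ with a joint energy bound $E^+(A)\cdot E^\times(A)\ll |A|^{5-\delta}$. Over $\R$ this comes from Szemer\'edi--Trotter through Elekes/Solymosi/Shkredov--style incidence arguments; over $\F_p$ with $|A|<\sqrt p$ it follows from the Bourgain--Katz--Tao sum--product theorem, sharpened by the affine--group incidence bounds in $\Aff(\F)$ developed earlier in the paper. Comparing with the previous step gives $|A|^{5-6c}\ll |A|^{5-\delta}$, which after choosing $c<\delta/7$ is a contradiction, proving the theorem with $K=1$.

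The crux of the argument is this joint energy bound. Classical sum--product estimates produce only $\min(E^+(A),E^\times(A))\le |A|^{3-\eps}$, which says nothing about a set whose two energies both sit near $|A|^{5/2}$---roughly halfway between a true Sidon set and an arithmetic/geometric progression. A genuine product bound demands the finer incidence geometry in $\Aff(\F)$, and in the $\F_p$ regime it is precisely the assumption $|A|<\sqrt{p}$ that keeps those incidence inequalities non--trivial; this is where the $\sqrt p$ restriction enters. A secondary technical issue is that the random--deletion bound in the first step is lossy, so if the only available joint energy estimate concerns a higher moment $E^\pm_{K+1}$ for some $K>1$, then $K$ must be taken to be the corresponding absolute constant, and $c$ shrinks accordingly.
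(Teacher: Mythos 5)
Your step 2 --- converting the assumed upper bounds on $\Sid^{+}_K(A)$ and $\Sid^{\times}_K(A)$ into lower bounds on energies via random deletion --- is sound and is essentially Lemma \ref{l:random_Ek} of the paper. The argument collapses at step 3: the ``joint energy bound'' $\E^{+}(A)\cdot\E^{\times}(A)\ll |A|^{5-\delta}$ is false, and no estimate of this shape can exist. Already for $A=\{1,\dots,n\}\subset\R$ one has $\E^{+}(A)\gg|A|^{3}$ and trivially $\E^{\times}(A)\ge r_{A/A}(1)^2=|A|^{2}$, so the product is $\gg|A|^{5}$. Worse, for $A=P\cup G$ with $P$ an arithmetic and $G$ a geometric progression of equal length, \emph{both} energies are $\gg|A|^{3}$, so even the much weaker statements $\E^{+}(A)\,\E^{\times}(A)\ll|A|^{6-\delta}$ and $\min\{\E^{+}(A),\E^{\times}(A)\}\ll|A|^{5/2-\delta}$ fail. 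This last example shows the obstruction is structural, not a matter of locating a sharper incidence theorem: the pair $(\E^{+}(A),\E^{\times}(A))$ can be simultaneously maximal while $A$ still contains an additive Sidon set of size $\gg|A|$ (inside $G$), so no contradiction can ever be extracted from lower bounds on the two energies alone. The correct form of ``both energies cannot be large'' is the Balog--Wooley theorem, which is a statement about a \emph{partition} $A=B\sqcup C$ with $\E^{+}(B),\E^{\times}(C)\ll|A|^{3-\delta}$; it does not imply your product bound for the whole set.

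The paper's route avoids this by replacing the pure energy contradiction with a structural dichotomy for higher energies (Theorem \ref{t:Ek}): either $\E^{\times}_k(A)\le|A|^{k+\delta}$, in which case Lemma \ref{l:random_Ek} already yields $\Sid^{\times}_{3k-3}(A)\gg|A|^{1/2+c}$ and one is done; or a proportion $|A|^{1-\eps}$ of $A$ lies in a set $A_*\subseteq H\cdot Z$ with $|HH|\ll|A|^{\eps}|H|$ and the product direct. Only in this second, structured case does incidence geometry enter: one bounds a higher energy of $A_*$ (respectively of $A_*+1$ for \eqref{f:Sidon_intr_new}) with respect to the \emph{other} operation, by rewriting the relevant equation as a point/line incidence count and invoking Proposition \ref{p:counting_collinear}, Lemma \ref{l:T_k-E(L)} and Theorem \ref{t:upper_T(AB)} over $\F_p$ (this is where $|A|<\sqrt p$ is used) or Szemer\'edi--Trotter over $\R$; a second application of Lemma \ref{l:random_Ek} to $A_*$ then finishes. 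If you wish to keep your two-step outline, the fix is to replace your step 3 by this dichotomy and run the incidence argument on the structured piece $H\cdot Z$, not on $A$ itself.
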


On upper  bounds for \eqref{f:Sidon_intr}, see \cite{R-N_W} and \cite{s_Sidon}.
Notice that $\Sid_K^{\times}(A) = \Sid_K^{+}(\log (A))$ and 
$\Sid_K^{+}(A) = \Sid_K^{\times}(\exp (A))$  for $A\subseteq \R^+$, say. 
Hence it is possible to rewrite bound \eqref{f:Sidon_intr} in terms of the only operation. 
We now consider a general question, which was mentioned by A. Warren during  CANT--2021 conference \cite{Warren}.

\bigskip 

{\bf Problem.} 
{\it Let $f,g$ be some `nice' (say, convex or concave) functions. Is it true that for any set $A\subset \R^+$, say, one has 
\[
	\max \{ \Sid^{+}_K (A), \Sid_K^{+}(f(A)) \} \,, 
	    \quad \quad 
	\max \{ \Sid^{\times}_K (A), \Sid_K^{\times}(g(A)) \} 
	\gg |A|^{1/2+c} \,?
\]
Here $c>0$, $K\ge 1$ are some absolute constants. What can be said for  $K$ exactly equals one  and for a certain $c>0$? 
}

\bigskip 

In this paper we obtain an affirmative answer in the case of $g(x)=x+1$ and $f(x)=\exp(x)$, where in the case of $\F_p$ the latter function is defined as $\exp(x) := g^x$ and $g$ is a fixed primitive root.

\begin{theorem}
	Let $A\subseteq \F$ be a set, where $\F = \R$ or $\F = \F_p$ (in the prime field  case suppose, in addition, that $|A|<\sqrt{p}$).
	Then there are some absolute constants $c>0$, $K\ge 1$ such that
\begin{equation}\label{f:Sidon_intr_new}
	\max \{ \Sid^{\times}_K (A), \Sid_K^{\times}(A+1) \} \gg |A|^{1/2+c} \,,
\end{equation}
    and 
\begin{equation}\label{f:Sidon_intr_new1.5}
	\max \{ \Sid^{+}_K (A), \Sid_K^{+}(\exp(A)) \} \gg |A|^{1/2+c} \,,
\end{equation}
	On the other hand, for any integer $k\ge 1$ there is $A \subseteq \F$ with 
\begin{equation}\label{f:Sidon_intr_new2} 
	\max \{ \Sid^{\times}_k (A), \Sid^{\times}_k (A+1) \} \ll k^{1/2} |A|^{3/4} \,.
\end{equation}
\label{t:Sidon_intr_new}
\end{theorem}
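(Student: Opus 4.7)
For \eqref{f:Sidon_intr_new}, which is the main content, I would argue by contradiction: suppose both $\Sid^{\times}_K(A)$ and $\Sid^{\times}_K(A+1)$ are $\ll |A|^{1/2+c}$ for the constants under consideration. The standard probabilistic reduction (along the lines of \cite{KSS}) then converts these bounds into the assertion that both multiplicative energies $\E_\times(A)$ and $\E_\times(A+1)$ are $\gg |A|^{3-O(c)}$; equivalently, there are many quadruples $(a,b,c,d)\in A^4$ with $ad=bc$ and many with $(a+1)(d+1)=(b+1)(c+1)$. The plan is to encode both conditions as incidences between the point set $\{(a,a+1):a\in A\}\subset \F^2$ and two natural families of curves coming from the action of $\Aff(\F_p)$ on the plane, and then derive a contradiction from the asymptotic incidence formulae announced in the introduction and developed at the beginning of Section~\ref{sec:proof}. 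This incidence step is the main obstacle: Szemer\'edi--Trotter alone delivers only the square-root barrier, so to push past $|A|^{1/2}$ one must use the refined $\Aff(\F_p)$-formula that constitutes the main additive-combinatorial innovation of the paper.

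For \eqref{f:Sidon_intr_new1.5}, the map $\exp:(\Z/(p-1)\Z,+)\to(\F_p^*,\times)$ (or $\exp:(\R,+)\to(\R^+,\times)$) is a group isomorphism, so for any set $B$ one has $\Sid^{\times}_K(B)=\Sid^{+}_K(\mathrm{ind}(B))$. Applying Theorem~\ref{t:Sidon_intr} to $B=\exp(A)$, which has the same cardinality as $A$, gives either $\Sid^{+}_K(\exp(A))\gg |A|^{1/2+c}$, which is \eqref{f:Sidon_intr_new1.5} directly, or $\Sid^{\times}_K(\exp(A))\gg |A|^{1/2+c}$; the latter equals $\Sid^{+}_K(A)$ by the identity above, again giving \eqref{f:Sidon_intr_new1.5}.

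For the upper bound \eqref{f:Sidon_intr_new2}, I would take a multiplicative subgroup $H\le \F_p^*$ of size a little above $p^{3/4}$ and set $A:=H\cap(H-1)=\{a\in H:a+1\in H\}$; character-sum bounds in the spirit of Bourgain--Glibichuk--Konyagin yield $|A|\asymp |H|^2/p$, so $|H|\asymp |A|^{3/2}$. Both $A$ and $A+1$ lie in the group $H$, hence via $\mathrm{ind}$ every multiplicative $k$-Sidon subset of either corresponds to an additive $k$-Sidon subset of $\Z/|H|\Z$ and has size $\ll k^{1/2}|H|^{1/2}\asymp k^{1/2}|A|^{3/4}$; a parallel smooth-number-type construction handles the case $\F=\R$.
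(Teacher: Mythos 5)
Your handling of the two easier parts is essentially sound and in places genuinely different from the paper. For \eqref{f:Sidon_intr_new1.5} the paper reruns the whole incidence argument for the equation $\exp(q_1)\exp(h_1)-\exp(q_2)\exp(h_2)=\lambda$, whereas you simply transport Theorem \ref{t:Sidon_intr} through the isomorphism $\exp$ --- a shortcut the introduction itself hints at ("$\Sid_K^{+}(A)=\Sid_K^{\times}(\exp(A))$"); the only point to check is that ``additive $K$--Sidon mod $p-1$'' and ``mod $p$'' differ by at most a factor $2$ in $K$ for sets of representatives in $[1,p-1]$, which is harmless since $K$ is an unspecified absolute constant. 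For \eqref{f:Sidon_intr_new2} in $\F_p$ your $A=H\cap(H-1)$ is a legitimate variant of the paper's remarked alternative $R[\G]=(\G-1)/(\G-1)$ (take $|H|$ strictly above $p^{3/4}$ so the Weil error does not swallow the main term $|H|^2/p$); however your construction does not survive in $\R$, where $\R^{*}$ has no large finite subgroups and counting consecutive smooth numbers is not under control, while the paper's $A=R[\{1,\dots,n\}]\subseteq \bar P/\bar P$ together with $1-R[X]=R[X]$ and Lemma \ref{l:L_in_sumsets} works uniformly in both fields.

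The genuine gap is in \eqref{f:Sidon_intr_new}. Your quantitative starting point is wrong: the contrapositive of Lemma \ref{l:random_Ek} converts $\Sid^{\times}_K(A)\ll|A|^{1/2+c}$ into $\E^{\times}(A)\gg|A|^{5/2-O(c)}$, and more generally $\E^{\times}_k(A)\gg|A|^{k+1/2-O(c)}$, not into $\E^{\times}(A)\gg|A|^{3-O(c)}$. This is fatal for the proposed architecture: at the $|A|^{5/2}$ energy level there is no contradiction to be extracted --- part \eqref{f:Sidon_intr_new2} of the very same theorem exhibits sets for which \emph{both} Sidon numbers are $\ll|A|^{3/4}$, hence both multiplicative energies genuinely exceed $|A|^{5/2}$ --- so a symmetric ``both energies are large, now contradict via incidences'' scheme cannot close, no matter which incidence bound is used. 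The paper's argument is asymmetric and runs through the structure-or-smallness dichotomy of Theorem \ref{t:Ek}: either $\E^{\times}_k(A)\le|A|^{k+\delta}$ for a suitable large $k$ (then Lemma \ref{l:random_Ek} already yields $\Sid^{\times}(A)\gg|A|^{1/2+c}$ and one stops), or a proportion $|A|^{1-\eps}$ of $A$ lies in a direct product $H\cdot Z$ with $|HH|\ll|A|^{\eps}|H|$. Only in that second case does incidence geometry enter, and it is used to \emph{upper bound} $\hat{\E}^{\times}_{l+1}(A_*+1)$: the equation $(h_1/q_1+1)/(h_2/q_2+1)=\lambda$ becomes the line family $h_1/q_1-\lambda h_2/q_2=\lambda-1$ indexed by $H\times H$, and the gain past the square-root barrier comes entirely from bounding $\T_{2^r}(\mathcal{L})$ for this family via Lemma \ref{l:T_k-E(L)} and Theorem \ref{t:upper_T(AB)}, which is possible only because $H$ is multiplicatively almost closed. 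Your sketch is missing precisely these two ingredients --- the higher-energy dichotomy that manufactures the multiplicative structure, and the need to feed that structure into $\T_{2^r}(\mathcal{L})$ before Proposition \ref{p:counting_collinear} returns anything nontrivial. Saying ``use the refined $\Aff(\F_p)$ incidence formula'' is correct as a slogan, but applied to an unstructured family of lines it does not beat Szemer\'edi--Trotter.
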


We thank Jimmy He for very useful discussions and valuable  suggestions. 

\section{Definitions and preliminaries}
\label{sec:preliminaries}

By $\Gr$ we denote an abelian group.
Sometimes we underline the group operation writing $+$ or $\times$ in  the considered quantities (as the energy, the representation function and so on, see below).  
Let $\F$ be the  field $\R$ or $\F=\F_p = \Z/p\Z$ for a prime $p$. Let $\F^* = \F \setminus \{0\}$.

We use the same capital letter to denote  set $A\subseteq \F$ and   its characteristic function $A: \F \to \{0,1 \}$. 
Given two sets $A,B\subset \Gr$, define  
the {\it sumset} 
of $A$ and $B$ as 
$$A+B:=\{a+b ~:~ a\in{A},\,b\in{B}\}\,.$$
In a similar way we define the {\it difference sets} and {\it higher sumsets}, e.g., $2A-A$ is $A+A-A$. 
We write $\dotplus$ for a direct sum, i.e., $|A\dotplus B| = |A| |B|$. 
For an abelian group $\Gr$
the Pl\"unnecke--Ruzsa inequality (see, e.g., \cite{TV}) holds stating
\begin{equation}\label{f:Pl-R} 
|nA-mA| \le \left( \frac{|A+A|}{|A|} \right)^{n+m} \cdot |A| \,,
\end{equation} 
where $n,m$ are any positive integers. 
It follows from a more general inequality contained in  \cite{Petridis} that  for 
arbitrary sets $A,B,C \subseteq \Gr$ one has 
\begin{equation}\label{f:Petridis}
    |B+C+X| \le \frac{|B+X|}{|X|} \cdot  |C+X| \,,
\end{equation}
where $X\subseteq A$ minimize the quantity $|B+X|/|X|$.
We  use representation function notations like  $r_{A+B} (x)$ or $r_{A-B} (x)$ and so on, which counts the number of ways $x \in \Gr$ can be expressed as a sum $a+b$ or  $a-b$ with $a\in A$, $b\in B$, respectively. 
For example, $|A| = r_{A-A} (0)$.

For any two sets $A,B \subseteq \Gr$ the {\it additive energy} of $A$ and $B$ is defined by
$$
\E (A,B) = \E^{+} (A,B) = |\{ (a_1,a_2,b_1,b_2) \in A\times A \times B \times B ~:~ a_1 - b^{}_1 = a_2 - b^{}_2 \}| \,.
$$
If $A=B$, then  we simply write $\E^{} (A)$ for $\E^{} (A,A)$.
More generally, for sets (functions) $A_1,\dots, A_{2k}$ belonging an arbitrary (noncommutative) group $\Gr$ and 
$k\ge 2$ define the energy 
$\T_{k} (A_1,\dots, A_{2k})$ as 
\[
	\T_{k} (A_1,\dots, A_{2k}) 
	=
\]
\begin{equation}\label{def:T_k}
	=
	 |\{ (a_1, \dots, a_{2k}) \in A_1 \times \dots \times A_{2k} ~:~ a_1 a^{-1}_2  \dots a_{k-1} a^{-1}_k = a_{k+1} a^{-1}_{k+2}  \dots a_{2k-1} a^{-1}_{2k}  \}| \,.
\end{equation}
In the abelian case put for $k\ge 2$  
\begin{equation}\label{def:E_k}
\E^{+}_k (A) = \sum_x r^k_{A-A} (x) = \sum_{\a_1, \dots, \a_{k-1}} |A\cap (A+\a_1) \cap \dots  \cap (A+\a_{k-1})|^2 \,.
\end{equation}
Clearly, $|A|^k \le \E^{+}_k (A) \le |A|^{k+1}$.
Also, we write $\hat{\E}^{+}_k (A) = \sum_x r^k_{A+A} (x)$.

By $\mathrm{ord} (x)$ denote the multiplicative order of an element of $x\in \F^*_p$ and let $\mathrm{ind} (x)$ is defined as $x=g^{\mathrm{ind} (x)}$, where $g$ is a fixed  primitive root of $\F_p^*$.
It is convenient  for us to think that the function $\mathrm{ind} (x)$ takes  values from $1$ to $p-1$ and hence $\mathrm{ind} (x)$ is defined on $\F_p^*$. 
In a similar way, we denote by $\exp(x) : \F^*_p \to \F_p^*$ the function $\exp(x) = g^x$, where $x\in \F^*_p$.
Let 
$\Aff (\F)$ 
be 
the group of transformations $x\to ax+b$, where $a\in \F^*$, $b\in \F$.
Sometimes we write $(a,b)\in \Aff (\F)$ for the map $x\to ax+b$.

The signs $\ll$ and $\gg$ are the usual Vinogradov symbols.
When the constants in the signs  depend on a parameter $M$, we write $\ll_M$ and $\gg_M$. 
All logarithms are to base $2$.
If we have a set $A$, then we will write $a \lesssim b$ or $b \gtrsim a$ if $a = O(b \cdot \log^c |A|)$, $c>0$.
Let us denote by $[n]$ the set $\{1,2,\dots, n\}$.

\bigskip 

We now mention several useful results, which we will appeal  in the text.
We 
start with 
a result from \cite{s_Kloosterman}.

\begin{lemma}
	Let $f_1,\dots,f_{2k} : \Gr \to \mathbb{C}$ be any functions.
	Then 
\begin{equation}\label{f:T_2^k}
	\T^{2k}_{k} (f_1,\dots, f_{2k}) \le \prod_{j=1}^{2k} \T_{k} (f_j) \,,
\end{equation}
    and 
    $\| f \| := \T_k (f)^{1/2k} \ge \| f\|_{2k}$, $k\ge 2$ is a norm of a function $f: \Gr \to \mathbb{C}$.   
\label{l:T_2^k}
\end{lemma}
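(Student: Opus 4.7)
The plan is to establish the multilinear bound \eqref{f:T_2^k} by iterated Cauchy--Schwarz in the spirit of the Gowers--Cauchy--Schwarz inequality for $U^k$-norms, and then deduce the norm properties from it together with a direct diagonal lower bound.

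The starting observation is that $\T_k$ can be written as an $\ell^2(\Gr)$ inner product. For any tuple $(g_1,\dots,g_k)$, set
\[
\Phi_{g_1,\dots,g_k}(x) = \sum_{a_1 a_2^{-1}\cdots a_{k-1} a_k^{-1} = x} g_1(a_1)\, \overline{g_2(a_2)} \cdots \overline{g_k(a_k)} \,,
\]
so that $\T_k(f_1,\dots,f_{2k}) = \langle \Phi_{f_1,\dots,f_k},\, \Phi_{f_{k+1},\dots,f_{2k}} \rangle$. A first Cauchy--Schwarz yields
\[
\T_k(f_1,\dots,f_{2k})^2 \le \T_k(f_1,\dots,f_k,f_1,\dots,f_k) \cdot \T_k(f_{k+1},\dots,f_{2k},f_{k+1},\dots,f_{2k})\,,
\]
which reduces the problem to expressions built from only half of the original functions, but with each appearing twice. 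I would then iterate: at stage $m$, apply a further Cauchy--Schwarz that splits off one more variable, producing an inequality of the form $\T_k(f_1,\dots,f_{2k})^{2^m} \le \prod_{\omega \in \{0,1\}^m} \T_k(\mathrm{pattern}_\omega)$. After $m \approx \log_2(2k)$ iterations every surviving pattern collapses to the diagonal tuple $(f_j,\dots,f_j)$, giving \eqref{f:T_2^k}.

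From \eqref{f:T_2^k} the norm properties follow quickly. Homogeneity $\|cf\| = |c|\,\|f\|$ is immediate from the $2k$-linearity of $\T_k$, and the triangle inequality is obtained by expanding $\T_k(f+g,\dots,f+g)$ into $2^{2k}$ multilinear pieces, bounding each via \eqref{f:T_2^k} by a mixed product $\T_k(f)^{j/2k}\,\T_k(g)^{(2k-j)/2k}$, and collecting via the binomial theorem. For the comparison $\|f\| \ge \|f\|_{2k}$, restrict the sum defining $\T_k(f)$ to the diagonal $a_1 = a_2 = \cdots = a_{2k} = a$; the word equation in \eqref{def:T_k} then becomes a trivial identity in $\Gr$, so this contribution alone equals $\sum_a |f(a)|^{2k} = \|f\|_{2k}^{2k}$.

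The main obstacle is the combinatorial bookkeeping in the iterated Cauchy--Schwarz. At each step the inequality must be applied to a \emph{single} coordinate in such a way that the word structure of \eqref{def:T_k} is preserved, and one must verify that after all iterations the doublings distribute uniformly across the indices $j = 1,\dots,2k$. The cleanest implementation is an inductive statement indexed by dyadic strings $\omega \in \{0,1\}^m$, choosing at step $m$ which coordinate to split and checking that $m = \log_2(2k)$ steps suffice to reduce every surviving pattern to the diagonal $(f_j,\dots,f_j)$. Some case analysis according to the parity of $k$ is needed when rewriting the word $a_1 a_2^{-1} \cdots a_{k-1} a_k^{-1}$ in a balanced form at each split, but this is routine once the bookkeeping framework is in place.
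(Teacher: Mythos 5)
The paper gives no proof of this lemma (it is imported from \cite{s_Kloosterman}), so your argument has to stand on its own. Its skeleton --- a Gowers--Cauchy--Schwarz inequality for the multilinear form, followed by the deduction of the norm axioms by multilinear expansion --- is the standard route, and your first Cauchy--Schwarz step and your derivation of the triangle inequality from \eqref{f:T_2^k} are sound. The iteration, however, has a real gap. After the first Cauchy--Schwarz you face quantities such as $\T_k(f_1,\dots,f_k,f_1,\dots,f_k)$, whose defining relation is a single \emph{cyclic} alternating word of length $2k$ in $\Gr$. Unlike the Gowers $U^d$ setting, where the index set $\{0,1\}^d$ has a product structure and $d$ Cauchy--Schwarz steps collapse every pattern to the diagonal, here the index set is $\Z/2k\Z$: a Cauchy--Schwarz split that stays inside the family $\T_k$ is only available at a rotation of the word into two halves of $k$ letters each, and when $2k$ is not a power of two the two halves carry different multisets of functions, so the number of distinct functions does not halve. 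Splitting off a single variable, as you propose, produces a longer word and leaves the family $\T_k$ altogether. This is not routine bookkeeping; the standard way to close the argument for general $k$ is to write $\T_k(f_1,\dots,f_{2k})$ as a trace of a product of $2k$ convolution operators $B_j$ built from the $f_j$, identify $\T_k(f_j)$ with $\mathrm{tr}\big((B_jB_j^*)^k\big)$ up to normalization, and invoke the noncommutative H\"older inequality $|\mathrm{tr}(C_1\cdots C_{2k})|\le\prod_j \big(\mathrm{tr}((C_jC_j^*)^{k})\big)^{1/2k}$, proved via von Neumann's trace inequality and H\"older on singular values. Your plan as written only succeeds when $2k$ is a power of two.

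The second gap is the comparison $\|f\|\ge\|f\|_{2k}$. Restricting the sum defining $\T_k(f)$ to the diagonal $a_1=\dots=a_{2k}$ does \emph{not} give a lower bound: for complex, or merely signed, $f$ the off-diagonal summands are not nonnegative, so they cannot be discarded. The correct argument goes through the $\ell^2$ norm: in the spectral picture $\T_k(f)$ is a normalized power sum $\sum_i\sigma_i^{2k}$ of singular values with $\sum_i\sigma_i^{2}$ proportional to $\|f\|_2^2$, so the power--mean inequality yields $\T_k(f)\ge\|f\|_2^{2k}$, and then $\|f\|_2\ge\|f\|_{2k}$ because the underlying measure is counting measure. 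With \eqref{f:T_2^k} and this corrected lower bound in hand, your homogeneity and triangle-inequality deductions go through.
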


The next result on collinear quadruples $\Q(A)$  was proved in \cite{collinear}. We rewrite the asymptotic formula for $\Q(A)$ in the following convenient form. 

\begin{lemma}
    Let $A\subseteq \F_p$ be  a set and $f_A (x) = A(x)-|A|/p$. Then 
\[
    \sum_{l \in \Aff(\F_p)} \left| \sum_x f_A(x) f_A(lx) \right|^4 \ll |A|^5 \log |A| \,,
\]
    where the summation over $l$ in the last formula is taken over all affine transformations.
\label{l:collinear}
\end{lemma}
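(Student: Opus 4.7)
The plan is to reinterpret the inner sum as a line-richness count. For $l \in \Aff(\F_p)$ with $l(x)=ax+b$, $a\ne 0$, the mean-zero property of $f_A$ together with the bijectivity of $l$ on $\F_p$ yields
\[
\sum_x f_A(x) f_A(lx) = R(a,b) - |A|^2/p,
\]
where $R(a,b) := |\{x \in A : ax+b \in A\}|$ is the number of points of $A\times A\subseteq\F_p^2$ lying on the non-horizontal line $y=ax+b$. Thus the lemma asks for a bound on the centered fourth moment $\sum_{(a,b)}(R(a,b)-\mu)^4$ with $\mu := |A|^2/p$ being the average richness of a non-horizontal line.

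Next I would expand the fourth power binomially and compute the low-order moments exactly:
\[
\sum_{(a,b)} R(a,b) = (p-1)|A|^2, \qquad \sum_{(a,b)} R(a,b)^2 = (p-1)|A|^2 + |A|^2(|A|-1)^2,
\]
both via direct counting: for distinct $x_1,x_2$ and distinct $y_1,y_2$, a unique $(a,b)\in\F_p^*\times\F_p$ realizes $y_i=ax_i+b$. These identities, combined with a similar but messier formula for $\sum_l R(l)^3$, produce the main terms of the binomial expansion of $(R-\mu)^4$. The problem then reduces to controlling the leading contribution $\sum_l R(l)^4$ modulo those main terms --- this is exactly the content of the asymptotic formula for the collinear quadruples $\Q(A)$ from \cite{collinear}.

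For $\sum_l R(l)^4$ I would apply a dyadic decomposition. Setting $L_k = \{l : R(l) \in [2^k, 2^{k+1})\}$, one has $\sum_l R(l)^4 \ll \sum_{k \le \log|A|} 2^{4k}|L_k|$. To bound $|L_k|$, I would invoke Szemer\'edi--Trotter-type point--line incidence estimates for Cartesian products in $\F_p^2$ (the Stevens--de Zeeuw bound when $|A|^2\le p$, or a Vinh-type spectral estimate for denser $A$), obtaining $|L_k|\ll |A|^4/2^{3k}+|A|^2/2^k$. Summing across the $O(\log|A|)$ dyadic levels (with the trivial cutoff $R(l)\le |A|$) gives the stated bound $|A|^5\log|A|$.

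The main obstacle is the regime $|A|\sim p$, where the naive moment bound $\sum_l R(l)^4\lesssim p|A|^5$ already exceeds the target by a factor of $p$. In that range the mean-centering $R-\mu$ is essential: after the full binomial expansion, the exact identities for $\sum_l R(l)^j$, $j\le 3$, must cancel the $p$-sized contributions against powers of $\mu$, leaving only the $|A|^5\log|A|$ error. Delivering this cancellation cleanly --- rather than proving a new incidence bound --- is the real technical work of the $\Q(A)$ formula in \cite{collinear} and the reason for its ``convenient form'' reformulation here.
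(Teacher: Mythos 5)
The paper gives no proof of this lemma at all: it is imported from \cite{collinear} as ``a convenient form'' of the asymptotic formula for the number of collinear quadruples $\Q(A)$ in $A\times A$, and your proposal correctly identifies exactly that equivalence. The identity $\sum_x f_A(x)f_A(lx)=R(l)-|A|^2/p$ and the binomial expansion of the centered fourth moment are precisely the translation the paper has in mind, and you correctly locate the hard analytic content in \cite{collinear}, so in substance you match the paper's treatment (indeed you supply more detail than it does). Two soft spots, neither fatal given that you ultimately cite \cite{collinear} for the real work, are worth flagging. First, unlike the zeroth, first and second moments, $\sum_l R(l)^3$ admits no exact closed form: it is the collinear-triples count, whose value genuinely depends on the structure of $A$ (three distinct abscissae and two ordinates determine the third ordinate, and whether it lies in $A$ is not a free choice), so it must itself be imported as an asymptotic $\frac{|A|^6}{p}+O(|A|^4\log|A|)$ from \cite{collinear} rather than ``computed''. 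Second, the dyadic decomposition of the uncentered moment $\sum_l R(l)^4$ via Stevens--de Zeeuw or Vinh rich-line bounds only closes for sparse $A$ (roughly $|A|\ll p^{2/3}$; beyond that the main term $|A|^8/p^2$ alone exceeds $|A|^5\log|A|$, and neither incidence bound supplies the needed rich-line estimate in the intermediate range). You acknowledge this in your last paragraph, but it means the dyadic sketch is not an alternative proof --- the cancellation argument of \cite{collinear} is not an optional refinement for the regime $|A|\sim p$ but the only route for all $|A|\gg p^{2/3}$.
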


Finally, we need a simplified version of \cite[Theorem 5]{s_Bourgain}.  

\begin{theorem}
    Let $A,B\subseteq \F_p$ be sets, $|AB|\le M|A|$, $k\ge 2$,  and $|B| \gtrsim_k M^{2^{k+1}}$. 
    Then 
\begin{equation}\label{f:upper_T(AB)}
    \T^{+}_{2^k} (A) \lesssim_k M^{2^{k+1}} \left( \frac{|A|^{2^{k+1}}}{p}  +
    |A|^{2^{k+1}-1} \cdot |B|^{-\frac{k-1}{2}}  \right) \,.  
\end{equation}
\label{t:upper_T(AB)}
\end{theorem}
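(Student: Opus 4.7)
The quantity $\T^+_{2^k}(A)$ in the abelian case equals $\sum_x r^{2^k}_{A-A}(x) = \sum_\xi |\hat A(\xi)|^{2^{k+1}}$, that is, the $L^{2^{k+1}}$-Fourier moment of $A$. The first summand $|A|^{2^{k+1}}/p$ in the stated estimate is simply the contribution of the trivial frequency $\xi=0$, and the substantive part of the inequality is the bound on the remaining non-trivial frequencies, which must exploit the multiplicative doubling $|AB|\le M|A|$.

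My plan is to prove the theorem by induction on $k\ge 2$. The base case $k=2$ is the estimate $\T^+_4(A)\lesssim M^8(|A|^8/p+|A|^7|B|^{-1/2})$, which one may obtain by reinterpreting $\sum_x r^4_{A-A}(x)$ as an $\Aff(\F_p)$-incidence count: each additive $8$-tuple with a prescribed cancellation corresponds (up to dilation) to a pair of collinear affine transformations, and Lemma~\ref{l:collinear} yields the appropriate fourth-moment bound on this count. The multiplicative structure enters via Pl\"unnecke \eqref{f:Petridis} applied to $|AB|\le M|A|$: one averages the incidence bound over the $|B|$ multiplicative dilates $A\cdot b$ of $A$, and a single Cauchy--Schwarz in the $B$-variable produces the first gain $|B|^{-1/2}$.

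For the inductive step from level $k$ to level $k+1$, the goal is an inequality of the form $\T^+_{2^{k+1}}(A)\lesssim M^{2^{k+1}}|B|^{-1/2}|A|^{2^{k+1}}\T^+_{2^k}(A)$ (modulo the main term). The approach is a Cauchy--Schwarz bootstrap: one of the $2^{k+1}$ summation variables is replaced by a multiplicative shift $b\in B$, the shifted set $A\cdot b$ still has controlled growth under multiplication by $B$ (by Pl\"unnecke, $|(Ab)\cdot B|\le M^2|A|$), and a Cauchy--Schwarz in $b$ yields a fresh factor of $|B|^{-1/2}$ at the price of $M^{O(1)}$. Iterating this argument $k-1$ times from the base case accumulates the exponents $M^{2^{k+1}}$ and $|B|^{-(k-1)/2}$ that appear in the conclusion.

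The main obstacle is the inductive step. The trivial pointwise bound $r^{2^{k+1}}_{A-A}(x)\le |A|^{2^k}\, r^{2^k}_{A-A}(x)$ is insufficient because it fails to introduce a new factor of $|B|^{-1/2}$; the multiplicative action of $B$ must enter substantively at every stage, presumably via a two-variable energy identity for the pair $(A,B)$ combined with a Katz--Koester--type inclusion, so that the $b$-averaging at each level can be combined with the previous level's estimate without double counting. The hypothesis $|B|\gtrsim_k M^{2^{k+1}}$ is precisely the threshold at which the accumulated gain $|B|^{-(k-1)/2}$ dominates the accumulated loss $M^{2^{k+1}}$, and making this balance work cleanly at every level of the induction (without breaking the $\lesssim_k$ polylogarithmic overhead) is the technical heart of the argument.
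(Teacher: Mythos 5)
You should first note that the paper contains no proof of this statement at all: Theorem \ref{t:upper_T(AB)} is imported as ``a simplified version of \cite[Theorem 5]{s_Bourgain}'' and used as a black box, so the comparison is really between your sketch and the argument in that reference. Your overall strategy --- an induction on $k$ in which each level extracts one factor $|B|^{-1/2}$ from the hypothesis $|AB|\le M|A|$, anchored by an incidence estimate at the bottom --- has the right shape, but what you have written is a plan rather than a proof. There is also an error in your opening identification: in this paper $\T^{+}_{2^k}(A)$ counts solutions of a single linear equation in $2^{k+1}$ variables, i.e.\ it equals $p^{-1}\sum_{\xi}|\widehat{A}(\xi)|^{2^{k+1}}$, whereas $\sum_x r^{2^k}_{A-A}(x)$ is the different quantity $\E^{+}_{2^k}(A)$ of \eqref{def:E_k}; the two agree only at the bottom level, and conflating them changes which moment you are actually bounding.

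The genuine gap is the inductive step, which you assert but do not prove --- and indeed you concede as much in your final paragraph. The inequality $\T^{+}_{2^{k+1}}(A)\lesssim M^{2^{k+1}}|B|^{-1/2}|A|^{2^{k+1}}\T^{+}_{2^k}(A)$ is exactly the statement that requires an argument, and ``replace one summation variable by a multiplicative shift $b\in B$ and Cauchy--Schwarz in $b$'' does not yield it: the variables of $\T^{+}_{2^{k+1}}(A)$ sit inside an additive equation, so after substituting $a\mapsto ab$ the equation mixes the two operations and the Cauchy--Schwarz in $b$ does not return an expression controlled by $\T^{+}_{2^k}$ of the same set $A$. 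This is precisely the point where the actual proof has to work (it runs the recursion through the almost multiplicatively invariant set $AB$ via an operator/eigenvalue inequality, paying the $M$-loss in a controlled way), and no mechanism for it is supplied here. The base case is likewise unestablished: Lemma \ref{l:collinear} bounds a fourth moment over $\Aff(\F_p)$ of the correlation $\sum_x f_A(x)f_A(lx)$, and you give no concrete reduction of $\T^{+}_4(A)$ to such a count --- the passage from additive $8$-tuples to affine incidences needs the dilates $Ab$ and an explicit identity that is missing. Finally, even granting your inductive inequality, the $M$-losses compound to $M^{8+\sum_{j=2}^{k}2^{j+1}}=M^{2^{k+2}}$ rather than the stated $M^{2^{k+1}}$, which would also force a stronger hypothesis on $|B|$; this is harmless for the applications in this paper but means the constants you would obtain are not those of the theorem.
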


\section{The proof of the main result}
\label{sec:proof}

We start with our counting Proposition \ref{p:counting_collinear}.
Let $\mathcal{P}, \mathcal{L} \subseteq \F_p \times \F_p$ be a set of points and a set of lines, correspondingly.  
The number of 
incidences 
between $\mathcal{P}$  and $\mathcal{L}$ is 
\begin{equation}\label{def:I(P,L)} 
    \mathcal{I} (\mathcal{P},\mathcal{L}) :=|\{(q,l)\in \mathcal{P} \times \mathcal{L}:\,q\in l\}| \,. 
\end{equation}

\begin{proposition}
    Let $A,B \subseteq \F_p$ be sets and $\mathcal{L}$ be a set of affine transformations. 
    Then for any positive integer $k$ one has 
\begin{equation}\label{f:counting_collinear}
    \I (A\times B, \mathcal{L}) - \frac{|A||B||\mathcal{L}|}{p} 
    \ll 
    \sqrt{|A||B| |\mathcal{L}|} \cdot 
    (\T_{2^k} (\mathcal{L}) |A| \log |A|)^{1/2^{k+2}} \,. 
\end{equation}
\label{p:counting_collinear}
\end{proposition}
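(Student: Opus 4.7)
The plan is to reformulate the incidence count in exponential--sum form and then bound it by iterated Cauchy--Schwarz combined with the collinear quadruples estimate of Lemma~\ref{l:collinear}. Set $f_A := A - |A|/p$ and $f_B := B - |B|/p$. Since each $l\in\mathcal{L}$ is a bijection on $\F_p$, so that $\sum_x f_A(x) = \sum_y f_B(y) = 0$, a direct expansion yields
\[
\sigma := \I(A\times B,\mathcal{L}) - \frac{|A||B||\mathcal{L}|}{p} = \sum_{l\in\mathcal{L}}\sum_{x\in\F_p} f_A(x)\, f_B(lx).
\]
Changing variables $y = lx$ and applying Cauchy--Schwarz in $y$ (using $\|f_B\|_2^2 \le |B|$) gives
\[
\sigma^2 \le |B|\cdot T,\qquad T := \sum_{l_1,l_2\in\mathcal{L}} \psi_A(l_2^{-1} l_1) = \sum_{g\in\Aff(\F_p)} \Phi_1(g)\,\psi_A(g),
\]
where $\psi_A(g) := \sum_x f_A(x) f_A(gx)$ is the autocorrelation of $f_A$ along the $g$--action, and $\Phi_1(g) := \#\{(l_1,l_2)\in\mathcal{L}^2 : l_2^{-1} l_1 = g\}$. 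Note that $T = \|F\|_2^2 \ge 0$ for $F(y) := \sum_{l\in\mathcal{L}} f_A(ly)$, a useful positivity one can exploit later.

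For $j\ge 1$ more generally let
\[
\Phi_j(g) := \#\bigl\{(l_1,\ldots,l_{2^j})\in\mathcal{L}^{2^j} : l_1 l_2^{-1}\cdots l_{2^j-1}^{}l_{2^j}^{-1} = g\bigr\},
\]
so that $\sum_g \Phi_j = |\mathcal{L}|^{2^j}$, $\sum_g \Phi_j^2 = \T_{2^j}(\mathcal{L})$, and $\Phi_{j+1} = \Phi_j \ast \Phi_j$ (convolution in $\Aff(\F_p)$). I would iterate Cauchy--Schwarz $k-1$ times to migrate from $\Phi_1$--weighted to $\Phi_k$--weighted sums: at each round one ``folds'' two copies of the current sum together, upgrading $\Phi_j$ to $\Phi_{j+1}$ via the convolution identity at the cost of a factor $|\mathcal{L}|^{2^j}$ and a doubling of the exponent of $\psi_A$. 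The goal is an inequality of the shape
\[
T^{2^{k+1}} \ll |\mathcal{L}|^{2^{k+1}}\,\T_{2^k}(\mathcal{L})\,\sum_{g} \psi_A(g)^{2^{k+1}},
\]
obtained from a final Cauchy--Schwarz separating $\Phi_k$ from $\psi_A^{2^{k+1}}$ and then invoking $\sum_g \Phi_k^2 = \T_{2^k}(\mathcal{L})$.

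The remaining $\psi_A$--sum is handled by Lemma~\ref{l:collinear}, which gives $\sum_g \psi_A(g)^4 \ll |A|^5 \log|A|$; combined with the trivial pointwise bound $|\psi_A(g)|\le |A|$ this interpolates to
\[
\sum_g \psi_A(g)^{2^{k+1}} \le |A|^{2^{k+1}-4}\sum_g \psi_A(g)^4 \ll |A|^{2^{k+1}+1} \log|A|.
\]
Substituting into the previous display, taking $2^{k+2}$--th roots and using $\sigma^2 \le |B|T$ delivers the claimed bound. The main obstacle is the iteration step: one must perform Cauchy--Schwarz so that the convolutional upgrading $\Phi_j \to \Phi_{j+1}$ really does take place at each round, producing the higher energy $\T_{2^k}(\mathcal{L})$ rather than merely $\T_2(\mathcal{L})$ (which is what a naive iteration raising only the exponent of $\psi_A$ gives). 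This is the delicate bookkeeping reminiscent of \cite{s_Kloosterman}, where the $\Aff$--group structure is used essentially.
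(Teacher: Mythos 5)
Your setup and your endgame coincide with the paper's: the identity $\sigma=\sum_{l\in\mathcal{L}}\sum_x f_A(x)f_B(lx)$, the first Cauchy--Schwarz giving $\sigma^2\le|B|\,T$ with $T=\sum_g\Phi_1(g)\psi_A(g)$, and the final use of Lemma~\ref{l:collinear} together with $\|\psi_A\|_\infty\le|A|$ would indeed reproduce \eqref{f:counting_collinear} exactly --- \emph{if} your intermediate inequality $T^{2^{k+1}}\ll|\mathcal{L}|^{2^{k+1}}\,\T_{2^k}(\mathcal{L})\sum_g\psi_A(g)^{2^{k+1}}$ were available. That inequality is precisely what you do not prove, and the iteration you describe for it cannot work as stated. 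The only mechanism that upgrades $\Phi_j$ to $\Phi_{j+1}$ is the substitution $y=g_2x$ inside $\sum_x f_A(g_1x)f_A(g_2x)=\psi_A(g_1g_2^{-1})$ after a Cauchy--Schwarz in the spatial variable $x$; this step requires $\psi_A$ to occur to the \emph{first} power (so that its defining sum over $x$ can be opened up and re-folded), and its price is a factor $\|f_A\|_2^2\le|A|$, not a doubling of the exponent of $\psi_A$. If $\psi_A$ already carries an exponent $m>1$, the same manipulation produces $\sum_{g_1,g_2}\Phi_j(g_1)\Phi_j(g_2)\psi_A(g_1)^{m-1}\psi_A(g_2)^{m-1}\psi_A(g_1g_2^{-1})$, which has no clean recursive structure because $\psi_A$ is not multiplicative on $\Aff(\F_p)$: one cannot reassemble $\psi_A(g_1)\psi_A(g_2)$ into a power of $\psi_A(g_1g_2^{-1})$. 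So ``convolutional upgrading'' and ``doubling the exponent of $\psi_A$'' are mutually exclusive outcomes of the same Cauchy--Schwarz, and the fallback (plain H\"older between $\Phi_1$ and $\psi_A^{2^{k+1}}$) yields only $\T_2(\mathcal{L})$, as you yourself note.

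The paper resolves this by keeping $\psi_A$ linear throughout the iteration: $k-1$ further applications of the group-action Cauchy--Schwarz give $\sigma^{2^{k}}\le|B|^{2^{k-1}}|A|^{2^{k-1}-1}\sum_h r_{(\mathcal{L}^{-1}\mathcal{L})^{2^{k-1}}}(h)\,\psi_A(h)$, and only then a single H\"older with exponents $(4/3,4)$ separates $\sum_h\psi_A(h)^4\ll|A|^5\log|A|$ (Lemma~\ref{l:collinear}) from $\big(\sum_h r^{4/3}\big)^3\le\big(\sum_h r^2\big)\big(\sum_h r\big)^2=\T_{2^k}(\mathcal{L})\,|\mathcal{L}|^{2^{k+1}}$. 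The exponent on $\psi_A$ stays equal to $4$ for every $k$; it never reaches $2^{k+1}$. The factor $|A|^{2^{k+1}-4}$ that you create at the very end from $\|\psi_A\|_\infty\le|A|$ is exactly the accumulated $\|f_A\|_2^2\le|A|$ losses of the paper's iteration, which is why the final numerology agrees; but as an intermediate statement your inequality is strictly stronger than what the paper's chain establishes, and I do not see how to prove it. To repair the argument, replace your target by the paper's chain of inequalities.
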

\begin{proof} 
We have 
$$
    \I (A\times B, \mathcal{L}) = \frac{|A||B||\mathcal{L}|}{p}  +  \sum_{x\in B} \sum_{l\in \mathcal{L}} f_A (l x) =
 \frac{|A||B||\mathcal{L}|}{p} + \sigma \,.   
$$
 To estimate the error term $\sigma$ we use the H\"older inequality several times as in \cite{Brendan_rich}, \cite{RS_SL2} and obtain 
\[
    \sigma^2 \le |B| \sum_{h} r_{\mathcal{L}^{-1}\mathcal{L}} (h) \sum_x f_A (x) f_A (h x) \,, 
\]
and further
\[
    \sigma^{2^{k}} \le |B|^{2^{k-1}}  |A|^{2^{k-1}-1} 
    \sum_{h} r_{(\mathcal{L}^{-1}\mathcal{L})^{2^{k-1}}} (h) \sum_x f_A (x) f_A (h x) \,.
\] 
Finally, applying Lemma \ref{l:collinear} and  the H\"older inequality one more time, we derive
\[
    \sigma^{2^{k+2}} \ll 
    |B|^{2^{k+1}}  |A|^{2^{k+1}-4} \left(\sum_{h} r^{4/3}_{(\mathcal{L}^{-1}\mathcal{L})^{2^{k-1}}} (h) \right)^3 \cdot |A|^5 \log |A| 
    \ll
\]
\[
    \ll 
    |B|^{2^{k+1}}  |A|^{2^{k+1}-4} \T_{2^k} (\mathcal{L}) |\mathcal{L}|^{2^{k+1}} \cdot |A|^5 \log |A| 
\]  
as required. 
$\hfill\Box$
\end{proof}

\bigskip 

The main advantage of bound  \eqref{f:counting_collinear} of Proposition \ref{p:counting_collinear} is that we have an asymptotic  formula for the number of incidences $\I (A\times B, \mathcal{L})$ (and the set $\mathcal{L}$ can be rather small) but not just upper bounds for $\I (\mathcal{P}, \mathcal{L})$ as in \cite{SdZ}. 
An asymptotic formula for the quantity $\I (\mathcal{P}, \mathcal{L})$  was known before in the specific case of large sets (see \cite{Vinh} or estimate \eqref{f:Vinh} below) and in the case  of Cartesian products but with large sets of lines, see \cite{s_asymptotic} and  \cite{SdZ}.

\bigskip

In the next lemma we estimate the energy $\T_k (\mathcal{L})$ for a concrete family of lines 
which will appear in the proofs of the results of our paper. 

\begin{lemma}
    Let $A,B \subseteq \F^*_p$ be sets, 
    and $\mathcal{L} = \{ (a,b) ~:~ a\in A,\, b\in B\} \subseteq \Aff(\F_p)$. 
    Then for any $k\ge 2$ one has 
\begin{equation}\label{f:T_k-E(L)}
    \T_k (\mathcal{L}) \le  |A|^{2k-1} \T^{+}_k (B) \,.
\end{equation}
\label{l:T_k-E(L)}
\end{lemma}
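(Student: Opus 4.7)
The plan is to exploit the semidirect product structure of $\Aff(\F_p)$. Writing each $l_i \in \mathcal{L}$ as a pair $(a_i, b_i)$ with $a_i \in A$, $b_i \in B$, and iterating the multiplication rule $(a,b)(a',b') = (aa', ab'+b)$, a short induction shows that the element $l_1 l_2^{-1} \cdots l_{k-1} l_k^{-1}$ has the form $(\mu_L, \sigma_L)$, where $\mu_L = a_1 a_2^{-1} \cdots a_{k-1} a_k^{-1}$ depends only on the $a_i$'s, and $\sigma_L$ is a linear form $\sum_{i=1}^{k} c_i b_i$ whose coefficients $c_i$ are nonzero monomials in the $a_j^{\pm 1}$. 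Consequently the identity defining $\T_k(\mathcal{L})$ breaks into two simultaneous equations: (I) the purely multiplicative equation $\mu_L = \mu_R$ in the $a_i$'s only, and (II) the single linear equation $\sigma_L - \sigma_R = \sum_{i=1}^{2k} d_i b_i = 0$ with nonzero $d_i \in \F^*_p$ depending on the full $a$-tuple $\mathbf{a} = (a_1, \dots, a_{2k})$.

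Let $N_{\mathbf{a}}$ denote the number of $b$-tuples in $B^{2k}$ solving (II) for a given $\mathbf{a}$. Then
\[
    \T_k(\mathcal{L}) = \sum_{\mathbf{a} \in A^{2k}} \mathbf{1}_{(I)}(\mathbf{a}) \cdot N_{\mathbf{a}}\,.
\]
The number of $\mathbf{a}$'s satisfying (I) is the multiplicative $\T_k$-energy of $A \subseteq \F^*_p$, trivially bounded by $|A|^{2k-1}$ (fix any $2k-1$ coordinates, the last is then determined). Hence the whole argument reduces to the uniform bound $N_{\mathbf{a}} \le \T^+_k(B)$.

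For this key step, let $\epsilon_i \in \{\pm 1\}$ be the sign pattern so that $\T^+_k(\,\cdot\,)$ counts solutions of $\sum_i \epsilon_i x_i = 0$. Since every $d_i$ is nonzero, the rescaling $y_i := \epsilon_i d_i b_i$ is a bijection $B \to \epsilon_i d_i B$ and rewrites (II) as $\sum_i \epsilon_i y_i = 0$, so that
\[
    N_{\mathbf{a}} = \T^+_k(\epsilon_1 d_1 B, \dots, \epsilon_{2k} d_{2k} B)
\]
in the sense of mixed energy. Applying Lemma \ref{l:T_2^k} to the $2k$ indicator functions $\epsilon_j d_j B$ gives $\T^+_k(\epsilon_1 d_1 B, \dots, \epsilon_{2k} d_{2k} B)^{2k} \le \prod_{j=1}^{2k} \T^+_k(\epsilon_j d_j B)$, and since $\T^+_k$ is invariant under multiplicative scaling of its argument, each factor on the right equals $\T^+_k(B)$. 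Thus $N_{\mathbf{a}} \le \T^+_k(B)$.

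Combining the two bounds yields $\T_k(\mathcal{L}) \le |A|^{2k-1} \T^+_k(B)$, as required. The only delicate point is the uniform bound $N_{\mathbf{a}} \le \T^+_k(B)$: the coefficients $d_i$ depend on $\mathbf{a}$ in a complicated way, but after rescaling variables to match the $\T^+_k$ sign pattern, the generalized H\"older inequality of Lemma \ref{l:T_2^k} collapses everything onto the scale-invariant additive energy $\T^+_k(B)$.
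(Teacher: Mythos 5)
Your proof is correct and follows essentially the same route as the paper: split the identity in $\Aff(\F_p)$ into the multiplicative equation on the first coordinates (bounded trivially by $|A|^{2k-1}$) and, for each fixed $a$-tuple, a linear equation on the $b$'s with nonzero coefficients, which after rescaling is controlled by the mixed energy $\T^{+}_k(d_1 B,\dots,d_{2k}B)\le \T^{+}_k(B)$ via Lemma \ref{l:T_2^k} and dilation invariance. Your write-up is in fact somewhat more explicit than the paper's (which treats the parity of $k$ and the coefficients $\alpha_i$ more tersely), but the underlying argument is identical.
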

\begin{proof}
    Let us consider the case of even $k$ and for odd $k$ the arguments are similar. 
    One has $\mathcal{L}^{-1}\mathcal{L} = \{ (a/c, (b-d)/c) ~:~ a,c\in A,\, b,d\in B \}$.
    Considering $\T_{2k} (\mathcal{L})$, we arrive to two equations. The first one is 
\begin{equation}\label{f:first_eq}
    \frac{a_1 \dots a_k}{c_1 \dots c_k} 
    =
    \frac{a'_1 \dots a'_k}{c'_1 \dots c'_k} \,.
\end{equation}
If we fix all variables $a_1 \dots a_k, a'_1 \dots a'_k$, $c_1 \dots c_k, c'_1 \dots c'_k \in A$, then the number of the solutions to the second equation is 
$\T^{+}_{2k} (\a_1 B, \dots, \a_{2k} B)$, where $\a_1,\dots, \a_{2k} \in \F_p^*$ are some elements of $A$ depending on the fixed variables. 
The last quantity is at most $\T^{+}_{2k} (B)$ by Lemma \ref{l:T_2^k}. 
Returning to \eqref{f:first_eq}, we obtain 
the required inequality.
$\hfill\Box$
\end{proof}

\bigskip 

Now we can obtain our first driving result.

\begin{theorem}
    Let $A,B, X_1,Y_1,Z_1 \subseteq \F^*_p$ be sets, $A=X Y_1$, $B=X Y_2$,  $|A|=|X||Y_1|/K_*$, $|B|=|X||Y_2|/K_*$, and $|X Z|\le K|X|$, $|ZZ| \le \tilde{K} |Z|$.
    Suppose that $|Z| \ge p^\delta$ for a certain $\delta \gg \log^{-1} \left( \frac{\log p}{\log \tilde{K}} \right)$.
    Then for a certain $k \ll \delta^{-1}$ the following holds 
\begin{equation}\label{f:sol_Sidon}
    |\{ (a,b)\in A \times B ~:~ a := f_* (b) \}| - \frac{K^2 K_*^2 |A||B|}{p} 
    \ll
    K^2 K^2_* \tilde{K} \sqrt{|A||B|} \cdot p^{-\frac{1}{16^{k}}}
    \,.     
\end{equation}
\label{t:sol_Sidon}
\end{theorem}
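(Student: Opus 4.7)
The plan is to recast the counting $N := |\{(a,b)\in A\times B : a = f_*(b)\}|$ as a point-line incidence problem on $\F_p^2$ and then invoke Proposition~\ref{p:counting_collinear} together with the energy bounds of Lemma~\ref{l:T_k-E(L)} and Theorem~\ref{t:upper_T(AB)}.

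The first step is to use the multiplicative structure coming from $A = XY_1$, $B = XY_2$ and the auxiliary set $Z$ (with $|XZ|\le K|X|$) to encode the fractional-linear relation $a = f_*(b)$ as an affine incidence condition. Concretely, one multiplies both sides of $a = f_*(b)$ by factors drawn from $X$, $Z$, and (the inverses of) $Y_1,Y_2$, chosen so that the fractional-linear map $f_*$ becomes an affine function of the remaining variable. This produces a family $\mathcal{L}\subseteq \Aff(\F_p)$ of affine transformations whose slopes and intercepts lie in products involving $X$ and $Z$, and the counting $N$ becomes proportional to $\mathcal{I}(A\times B,\mathcal{L})$ modulo explicit over-counting factors. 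Tracking those factors carefully --- $K_*^2$ from the non-unique representations $a=xy_1$, $b=x'y_2$, and $K^2$ from enlarging $X$ to $XZ$ via $|XZ|\le K|X|$ --- yields the main term $\frac{K^2 K_*^2 |A||B|}{p}$ on the left-hand side of \eqref{f:sol_Sidon}.

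The second step is the application of Proposition~\ref{p:counting_collinear}, which gives
\[
\mathcal{I}(A\times B,\mathcal{L}) - \frac{|A||B||\mathcal{L}|}{p} \ll \sqrt{|A||B||\mathcal{L}|}\cdot \left(\T_{2^k}(\mathcal{L})\,|A|\log|A|\right)^{1/2^{k+2}}.
\]
Since $\mathcal{L}$ will have the Cartesian-product form of Lemma~\ref{l:T_k-E(L)}, with the slope set essentially $X$ and the intercept set essentially $Z$, that lemma gives
$\T_{2^k}(\mathcal{L}) \lesssim |X|^{2\cdot 2^k-1}\,\T^{+}_{2^k}(Z)$ up to factors absorbed into $K$ and $K_*$. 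The hypothesis $|ZZ|\le \tilde{K}|Z|$ then allows Theorem~\ref{t:upper_T(AB)}, applied with $A=B=Z$ and doubling constant $M=\tilde{K}$, to yield
\[
\T^{+}_{2^k}(Z) \lesssim_k \tilde{K}^{2^{k+1}}\left(\frac{|Z|^{2^{k+1}}}{p} + |Z|^{2^{k+1}-(k+1)/2}\right),
\]
valid as long as $|Z|\gtrsim_k \tilde{K}^{2^{k+1}}$. The assumption $|Z|\ge p^{\delta}$ with $\delta \gg 1/\log(\log p/\log\tilde{K})$ is precisely what is needed to make this hold for some $k\asymp \delta^{-1}$.

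Assembling the estimates, the $2^{-(k+2)}$-th root in Proposition~\ref{p:counting_collinear} converts the $|Z|^{-(k+1)/2}$ gain in $\T^+_{2^k}(Z)$ into the final power saving $p^{-1/16^{k}}$, with the $\tilde{K}^{2^{k+1}}$ factor contributing the single $\tilde{K}$ visible in the error. I expect the first step to be the main obstacle: the family $\mathcal{L}$ must be constructed so that (i) $N$ is genuinely captured by $\mathcal{I}(A\times B,\mathcal{L})$ up to the claimed factors $K^2K_*^2$, without extra losses, and (ii) the fractional-linear graph $\{a=f_*(b)\}$ is faithfully linearised by the multiplicative change of variables, so that $\mathcal{L}$ truly has the product structure required by Lemma~\ref{l:T_k-E(L)}. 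All subsequent manipulations are essentially routine unwinding of the estimates above.
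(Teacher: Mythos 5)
Your high-level architecture matches the paper's: reduce to point/line incidences, apply Proposition \ref{p:counting_collinear}, control $\T_{2^k}(\mathcal{L})$ via Lemma \ref{l:T_k-E(L)}, feed $|ZZ|\le\tilde K|Z|$ into Theorem \ref{t:upper_T(AB)} with $M=\tilde K$, and close with $k\asymp\delta^{-1}$ and $|Z|^{k}\sim p$. But the step you yourself flag as the main obstacle --- the construction of the point/line system --- is not carried out, and the version you sketch would fail. If the points really are (coordinatewise inverses of) $A\times B$, then the relation $a=f_*(b)$, rewritten as $\frac{1}{a}-\frac{\beta}{b}=\alpha$, is a \emph{single} affine condition with fixed coefficients: every member of your family $\mathcal{L}$ would be the same line, $\T_{2^k}(\mathcal{L})$ would be maximal, and Proposition \ref{p:counting_collinear} would give nothing. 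To obtain a genuine family of lines the auxiliary parameters must be moved into the point coordinates, which forces the point set to change.

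The paper's actual reduction is an amplification over $Z^2$: setting $Q_1=AZ$, $Q_2=BZ$, one has
$$|Z|^2\,|\{(a,b)\in A\times B:\ a=f_*(b)\}|\ \le\ |\{(q_1,q_2,z_1,z_2)\in Q_1\times Q_2\times Z^2:\ q_1/z_1=f_*(q_2/z_2)\}|\,,$$
and the latter equation linearizes to $\frac{z_1}{q_1}-\frac{\beta z_2}{q_2}=\alpha$, i.e.\ incidences between the points of $Q_1^{-1}\times Q_2^{-1}$ and lines indexed by $(z_1,z_2)\in Z\times Z$; so both the slope and the intercept data of the lines come from $Z$, not from $X$, and Lemma \ref{l:T_k-E(L)} is applied with $|Z|^{2^{k+1}-1}\T^{+}_{2^k}(Z)$ rather than $|X|^{2^{k+1}-1}\T^{+}_{2^k}(Z)$. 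The product structure $A=XY_1$, $B=XY_2$ and the hypothesis $|XZ|\le K|X|$ are used for exactly one purpose: to bound $|Q_1|\le|XZ||Y_1|\le KK_*|A|$ (likewise $|Q_2|$), which is where $K^2K_*^2$ enters --- as an upper bound for the main term $|Q_1||Q_2|/p$ of the incidence formula, not as an over-counting multiplicity for representations $a=xy_1$. Your accounting of $K_*^2$ and $K^2$ as representation-multiplicity losses is therefore not what happens, and since $|A|=|X||Y_1|/K_*$ only controls the \emph{average} number of representations, an argument along those lines would need additional care in any case. Once the amplification step is supplied, the remainder of your outline goes through essentially as you describe.
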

\begin{proof}
    Let $\sigma$ be the quantity from the left--hand side of \eqref{f:sol_Sidon}. 
    Also, let $Q_1=AZ$, $Q_2 = BZ$. 
    Then $|Q_1|\le |X Z||Y_1| \le K|X| |Y_1|= KK_* |A|$ and, similarly, for $Q_2$.  
    We have 
\[
    |Z|^2 \sigma \le |\{ (q_1,q_2,z_1,z_2)\in Q_1\times Q_2 \times Z^2  ~:~ q_1/z_1 := f_* (q_2/z_2) \}| \,. 
\]
    Using the definition of the function $f_*$, we arrive to the equation 
\begin{equation}\label{f:lines} 
    \frac{q_1}{z_1} = \frac{q_2}{\alpha q_2 + \beta z_2}
    \quad \quad 
    \implies
    \quad \quad
    \frac{z_1}{q_1} - \frac{\beta z_2}{q_2} = \a \,.  
\end{equation} 
    The last equation can be interpreted as points/lines incidences with the set of lines $\mathcal{L}= Z \times Z$, any $l\in \mathcal{L}$ has the form $l:  z_1X-\beta z_2 Y = \a$ and the set of points $\mathcal{P} = Q^{-1}_1 \times Q^{-1}_2$. 
    Applying Proposition \ref{p:counting_collinear}, we obtain  for any  $k$
\[
    \sigma - \frac{|Q_1||Q_2|}{p} \ll 
        |Z|^{-1} \sqrt{|Q_1||Q_2|} \cdot 
    (\T_{2^k} (\mathcal{L}) |Q_1| \log |Q_1|)^{1/2^{k+2}} \,. 
\]
    Using our bounds for sizes of the sets $Q_1,Q_2$, combining with Lemma \ref{l:T_k-E(L)} and Theorem \ref{t:upper_T(AB)}, we get 
\[
    \sigma - \frac{K^2 K^2_* |A||B|}{p} \lesssim K K_* \tilde{K} \sqrt{|A||B|} \cdot \left(K K_* |A| |Z|^{-\frac{k+1}{2}} \right)^{1/2^{k+2}}
\] 
    provided $|Z| \gtrsim_k \tilde{K}^{2^{k+1}}$ and $|Z|^{k+1} \ll p^2$. 
    Taking $|Z|^{k} \sim p$, we satisfy the second condition and obtain
\[
    \sigma - \frac{K^2 K^2_* |A||B|}{p} \ll K^2 K^2_* \tilde{K} \sqrt{|A||B|} \cdot p^{-\frac{1}{16^{k}}} \,.
\] 
    Choosing 
    $k \sim 1/\delta$, we have the condition $|Z|^{k} \sim p$ and the assumption
    $\delta \gg \log^{-1} \left( \frac{\log p}{\log \tilde{K}} \right)$
    implies that the inequality $|Z| \gtrsim_k \tilde{K}^{2^{k+1}}$ takes place.
$\hfill\Box$
\end{proof}

\begin{remark}
    One can increase the generality of  Theorem \ref{t:sol_Sidon} considering different sets $X_1, X_2, Z_1,Z_2$ such that $|X_1 Z_1|\le K_1 |X_1|$, $|X_2 Z_2|\le K_2 |X_2|$ and so on. We leave the proof of this generalization to the interested reader. 
\end{remark}

\begin{corollary}
    Let $g$ be a primitive root and $I,J \subseteq \F^*_p$ be two geometric progressions with the same base $g$ such that 
    \begin{equation}\label{cond:sol_Sidon_cor}
        \exp(C \log p/ \log \log p) \ll |I| = |J| \le p/2 \,,
    \end{equation} 
    where $C>0$ is an absolute constant. 
    Then 
\begin{equation}\label{f:sol_Sidon_cor}
    |\{ (a,b)\in I \times J ~:~ a := f_* (b) \}| \le (1-\kappa) |I|\,,
\end{equation}
    where $\kappa>0$ is an absolute constant. 
\label{c:sol_Sidon}
\end{corollary}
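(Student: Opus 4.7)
The plan is to apply Theorem~\ref{t:sol_Sidon} directly with $A := I$ and $B := J$, using a decomposition that exploits the fact that geometric progressions of the same ratio have essentially no multiplicative doubling. Since $g$ is primitive and $I, J$ are geometric progressions of ratio $g$, I write $I = g^{a} X_{0}$ and $J = g^{b} X_{0}$, where $X_{0} := \{1, g, g^{2}, \ldots, g^{N-1}\}$ and $N := |I| = |J|$. Taking $X := X_{0}$, $Y_{1} := \{g^{a}\}$, $Y_{2} := \{g^{b}\}$ gives $A = XY_{1}$ and $B = XY_{2}$ with no overcounting, so $K_{*} = 1$.

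The auxiliary set will be $Z := \{1, g, g^{2}, \ldots, g^{M-1}\}$ with $M := \exp(\alpha \log p /\log\log p)$ for a constant $\alpha > 4$. Because $X$ and $Z$ are both sets of consecutive powers of $g$, the products $XZ$ and $ZZ$ are also such sets, yielding $|XZ| = N + M - 1$ and $|ZZ| = 2M - 1$. Hence $K \leq 1 + M/N$ and $\tilde{K} \leq 2$. The hypothesis $|I| \geq \exp(C \log p/\log\log p)$ with $C > \alpha$ forces $M/N = o(1)$, so $K = 1 + o(1)$. The remaining conditions of Theorem~\ref{t:sol_Sidon} --- namely $\delta := \alpha/\log\log p \gg 1/\log(\log p /\log \tilde{K})$ with $\tilde{K} = 2$, and $|Z| \gtrsim_{k} \tilde{K}^{2^{k+1}}$ with $k \asymp \log\log p/\alpha$ --- are routine to verify for sufficiently large $p$.

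Invoking Theorem~\ref{t:sol_Sidon} then yields
\[
\sigma := |\{(a,b) \in I \times J : a = f_{*}(b)\}| \ll \frac{K^{2} K_{*}^{2} N^{2}}{p} + K^{2} K_{*}^{2} \tilde{K} \cdot N \cdot p^{-1/16^{k}}.
\]
Because $\alpha > 4$, we have $16^{k} = (\log p)^{4/\alpha} \ll \log p$, hence $p^{-1/16^{k}} = \exp(-(\log p)^{1 - 4/\alpha}) = o(1)$, making the error term $o(N)$. The main term is at most $(1 + o(1)) N^{2}/p \leq (1 + o(1)) N/2$ by the bound $N \leq p/2$. Combining the two contributions gives $\sigma \leq (1/2 + o(1)) N$, which is $\leq (1 - \kappa)|I|$ for any fixed $\kappa < 1/2$ (say $\kappa = 1/4$) once $p$ is sufficiently large.

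The main delicate point is the balance in the size of $Z$. On the one hand, $|Z|$ must be small relative to $|I|$ so that the doubling constant $K = |XZ|/|X|$ stays close to $1$; this is essential because only a main term of the form $K^{2} |I|^{2}/p$ with $K$ near $1$ can beat the trivial bound $\sigma \leq |I|$ when $|I|$ is allowed to be as large as $p/2$. On the other hand, $|Z|$ must exceed $p^{\delta}$ with $\delta$ of order $1/\log\log p$, and must dominate $\tilde{K}^{2^{k+1}}$, in order for Theorem~\ref{t:sol_Sidon} to kick in. The admissible window $|Z| = \exp(\Theta(\log p/\log\log p))$ is exactly what dictates the lower-bound hypothesis $|I| \gg \exp(C\log p/\log\log p)$ of the corollary.
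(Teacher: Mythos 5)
Your proof is correct and follows essentially the same route as the paper: the same decomposition $I=g^{a}X_0$, $J=g^{b}X_0$ with $K_*=1$, followed by an application of Theorem~\ref{t:sol_Sidon} with $Z$ a short initial segment of powers of $g$. The only difference is the choice of $|Z|$: the paper takes $|Z|=[\,|I|/4\,]$, which gives the explicit constants $K\le 5/4$, $\tilde{K}<2$ and $\kappa=3/16$, whereas your near-minimal choice $|Z|=\exp(\alpha\log p/\log\log p)$ gives $K=1+o(1)$ and hence any fixed $\kappa<1/2$ for $p$ large --- a marginally sharper constant at the price of asymptotic rather than explicit estimates.
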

\begin{proof} 
    Let $I = a\cdot \{1,g,\dots, g^n\}$, 
    $J= b\cdot \{1,g,\dots, g^n\}$,  
    where $n=|I|=|J|$. 
    We apply Theorem \ref{t:sol_Sidon} with $A=I$, $B=J$, $Y_1 = \{a\}$, $Y_2 = \{b\}$, $X=\{1,g,\dots, g^n\}$, $K_*= 1$ and $Z = \{1,g,\dots, g^m \}$, where $m=[cn]$, $c=1/4$.
    Then $K\le 1+c$ and $\tilde{K} <2$. 
    By formula \eqref{f:sol_Sidon}, we obtain 
$$
    |\{ (a,b)\in I \times J ~:~ a := f_* (b) \}|
    - \frac{(1+c)^2 |I||J|}{p} 
    \ll
    |I| \cdot p^{-\frac{1}{16^{k}}} \,.
$$
    We have $\frac{(1+c)^2 |I||J|}{p} \le \frac{25}{32} |I|$ because $n\le p/2$.
    Recalling that $k\sim 1/\delta$ and $\delta \gg (\log \log p)^{-1}$, we derive estimate \eqref{f:sol_Sidon_cor} thanks to our condition \eqref{cond:sol_Sidon_cor}. 
    This completes the proof. 
$\hfill\Box$
\end{proof}

\bigskip 

Now we are ready to prove Theorem \ref{t:main_intr} from the introduction, which we formulate in a slightly general form. In our arguments we use some parts of the proof from \cite{He}.  

\begin{theorem}
    Let $p$ be a prime number and $\gamma \in \F_p^*$ be an element of order at least  
    $$
        \exp(\Omega(\log p/\log \log p)) \,.
    $$
    Also, let $\eps_{j}$ be the random variables distributed uniformly on $\{\gamma^{-1}, \gamma \}$. 
    Consider the lazy Markov chain $0\neq X_0,X_1,\dots, X_n, \dots $ defined by 
\[
    X_{j+1}=\left\{\begin{array}{ll}
f_* \left(X_{j}\right) \cdot \varepsilon_{j+1} & \text { with probability } 1 / 2\,, \\
X_{j} & \text { with probability } 1 / 2 \,. 
\end{array}\right.
\]
    Then for an arbitrary  $c>0$ and for any $n = c \exp(\log p/ \log \log p)$ one has 
\[
    \| P_n - U\| := \frac{1}{2} \max_{A \subseteq \F^*_p} \left| \mathrm{P} (X_n \in A) - \frac{|A|}{p-1} \right| \le e^{-O(c)} \,.
\]
    The same is true for the chain $X_{j+1} = f_* \left(X_{j}\right) \cdot \varepsilon_{j+1}$, where $\eps_j$ denote the random variables distributed  uniformly on $\{ 1, \gamma^{-1}, \gamma\}$.  
\label{t:main}
\end{theorem}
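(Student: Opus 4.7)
The plan is to follow the blueprint of He \cite{He} but with the analytic inputs there replaced by the incidence machinery of Proposition \ref{p:counting_collinear}, Theorem \ref{t:sol_Sidon}, and Corollary \ref{c:sol_Sidon} developed in this section. First I would transfer the problem to the cyclic group $\Z/(p-1)\Z$ via the discrete logarithm: setting $Y_j := \mathrm{ind}(X_j)$, $d := \mathrm{ind}(\gamma)$, and $F(y) := \mathrm{ind}(f_*(g^y))$, the chain becomes the lazy process $Y_{j+1}=Y_j$ with probability $1/2$ and $Y_{j+1}=F(Y_j)\pm d$ with probability $1/4$ each, where $F$ is a bijection of $\Z/(p-1)\Z$. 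Since the discrete logarithm preserves total-variation distance, it suffices to bound $\|\tilde P_n-U\|$ for the induced distribution $\tilde P_n$ on $\Z/(p-1)\Z$, and by Cauchy--Schwarz it is enough to establish
\[
\|\tilde\phi_n\|_2^2 := \sum_{y}\Bigl(\tilde P_n(y) - \tfrac{1}{p-1}\Bigr)^2 \le e^{-O(c)}/(p-1).
\]

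The core of the argument is an iterated $L^2$-decrement. I would dyadically decompose the positive part of $\tilde\phi_n$ into level sets $A_t := \{y : \tilde\phi_n(y) \asymp 2^{-t}/(p-1)\}$. For each $t$ such that $|A_t|$ exceeds the threshold $\exp(C \log p/\log\log p)$ from Corollary \ref{c:sol_Sidon}, I would apply Theorem \ref{t:sol_Sidon} with $X$ a long $\gamma$-geometric progression of low doubling containing $A_t$, and $Z$ a short $\gamma$-progression of length tuned so that $|Z|^k \sim p$; this yields that one active step of the chain reduces the mass of $A_t$ by a multiplicative factor $(1-\kappa)$. Level sets below the threshold contribute at most $e^{-O(c)}/(p-1)$ in total to $\|\tilde\phi_n\|_2^2$, so chaining the decrement over $n = c\exp(\log p/\log\log p)$ steps produces the required bound.

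The main obstacle will be the structural step that replaces an arbitrary level set $A_t$ by a set of small $\gamma$-doubling so that Theorem \ref{t:sol_Sidon} literally applies. Two natural routes suggest themselves: (i) invoke the Pl\"unnecke--Ruzsa and Petridis inequalities \eqref{f:Pl-R}, \eqref{f:Petridis} together with a Balog--Szemer\'edi--Gowers-style regularization to reduce $A_t$ to a structured subset without significant mass loss; or (ii) bypass this reduction by re-running the proof of Proposition \ref{p:counting_collinear} directly on the bilinear form $\sum_y \tilde\phi_n(y)\tilde\phi_n(F(y)\pm d)$, taking the set of lines $\mathcal{L}$ to be a product of short $\gamma$-progressions as in the proof of Theorem \ref{t:sol_Sidon}. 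The second route mirrors the incidence-geometric strategy of this section and seems the more natural. Finally, the variant with $\eps_j$ uniform on $\{1,\gamma^{-1},\gamma\}$ requires only trivial modifications, the extra atom at $1$ substituting for the laziness.
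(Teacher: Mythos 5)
Your proposal diverges from the paper's argument in a way that leaves the central difficulty unresolved. The paper does not run an $L^2$-decrement on the density of the chain at all: it invokes Theorem \ref{t:Chung}, the Cheeger inequality for lazy chains on directed graphs, and reduces everything to a lower bound on the Cheeger constant $h(G)$ of the graph with $V=\F_p^*$ and $x\to y$ iff $y=f_*(x)\gamma^{\pm1}$. The point of that reduction is precisely to trade the analytic object you are trying to control (level sets of $\tilde P_n$, which have no structure whatsoever) for a combinatorial one (an arbitrary set $S$ with $|S|\le p/2$) that admits a clean dichotomy: write $S$ as a disjoint union of $|J|$ maximal geometric progressions with step $\gamma^2$; if $|J|$ is large, every right endpoint of a progression forces an edge from $S$ to $S^c$ (via the connection through $f_*^{-1}(z\gamma)$), giving $h(G)\gtrsim |J|/|S|$ directly; if $|J|$ is small, then after discarding an $O(\omega)$ fraction of $S$ the remainder is genuinely a product set $S'=XY$ with $X$ a long $\gamma^2$-progression, and only then does Theorem \ref{t:sol_Sidon} apply to show $e(S',S')/|S|\le 13/16$ and hence $h(G)\ge 1/32$. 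Your proposal has no analogue of this dichotomy, and this is exactly the obstacle you flag but do not overcome.

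Concretely, three steps of your outline would fail as written. First, Theorem \ref{t:sol_Sidon} requires $A=XY_1$ to literally be a direct product of a low-doubling set with a set of shifts; an arbitrary level set $A_t$ of $\tilde P_n$ admits no such description, and neither Pl\"unnecke--Ruzsa nor a Balog--Szemer\'edi--Gowers regularization can manufacture one, because $A_t$ carries no a priori additive or multiplicative structure to regularize (at time $0$ the density is a point mass). Second, even granting the structure, Corollary \ref{c:sol_Sidon} counts solutions to $a=f_*(b)$ for two progressions; it does not by itself yield a per-step contraction of $\|\tilde\phi_n\|_2$, and no such contraction by a constant factor can hold (it would give mixing time $O(\log p)$, far stronger than the theorem). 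The Cheeger machinery of Theorem \ref{t:Chung} is what converts the isoperimetric bound $h(G)\gg\exp(-O(\log p/\log\log p))$ into the stated mixing time $h(G)^{-2}\log|V|$, and you neither use it nor supply a substitute. Third, the assertion that level sets below the cardinality threshold contribute at most $e^{-O(c)}/(p-1)$ to $\|\tilde\phi_n\|_2^2$ is false in general: a highly concentrated density puts essentially all of its $L^2$ mass on a single small level set. The passage to $\Z/(p-1)\Z$ by discrete logarithm is also a detour: $F(y)=\mathrm{ind}(f_*(g^y))$ is no more tractable than $f_*$, and the paper works directly in $\F_p^*$.
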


Let $P$ be an ergodic Markov chain on a $k$--regular directed 
graph $G=G(V,E)$.
Let $h(G)$ be the Cheeger constant
\begin{equation}\label{def:Cheeger}
    h(G) = \min_{|S| \le |V|/2} \frac{e(S,S^c)}{k|S|} \,,
\end{equation}
where $e(S,S^c)$ is the number of edges between $S$ and the complement of $S$. 
We need a result from \cite{Chung} (a more compact version is \cite[Theorem 4.1]{He}).

\begin{theorem}
    Let $P$ be an ergodic Markov chain on a graph $G=G(V,E)$. 
    Consider the lazy 
    chain $X_0,X_1,\dots, X_n, \dots $
    with transition matrix $(I+P)/2$, and starting from a certain  deterministic $X_0$.
    Then for any $c>0$ and any $n = c h(G)^{-2} \log |V|$ one has 
\[
    \max_{A\subseteq V} \left| \mathrm{P} (X_n \in A) - \frac{|A|}{|V|} \right| \le e^{-O(c)} \,.
\]
\label{t:Chung}
\end{theorem}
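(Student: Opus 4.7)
The plan is to combine a Cheeger-type spectral gap estimate with the standard $L^2$-contraction argument for mixing, exploiting laziness to guarantee that all eigenvalues of the transition operator lie in $[0,1]$. This is essentially the route taken in \cite{Chung}.

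The first step is to pass from the Cheeger constant to a spectral gap. For a $k$-regular directed graph, the stationary distribution of $P$ is the uniform measure $\pi$, and the same is true of the additive reversibilization $\tilde P := (P + P^*)/2$, where $P^*$ is the time-reversal. Chung's version of Cheeger's inequality for directed regular graphs yields $1 - \lambda_2(\tilde P) \ge h(G)^2/2$, where $\lambda_2$ is the second largest eigenvalue. Averaging with the identity, the lazy chain $P' = (I+P)/2$ has eigenvalues in $[0,1]$, and its symmetrization $(I+\tilde P)/2$ has spectral gap $\gtrsim h(G)^2$; in particular the second largest eigenvalue $\lambda_\star$ of $(I+\tilde P)/2$ satisfies $\lambda_\star \le 1 - c_0 h(G)^2$ for an absolute constant $c_0>0$.

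The second step is the spectral bound on the total variation distance. For the distribution $\mu_n$ of $X_n$ starting from a deterministic $X_0$, write $f_0 = \mathbf{1}_{X_0}/\pi(X_0)$ so that $f_n := (P')^n f_0$ is the density of $\mu_n$ relative to $\pi$. Since $\|f_0 - \mathbf{1}\|_{L^2(\pi)}^2 \le 1/\pi(X_0) \le |V|$, contraction under the self-adjoint part of $P'$ gives
\[
\|f_n - \mathbf{1}\|_{L^2(\pi)} \le \lambda_\star^{\,n}\, \sqrt{|V|}\,.
\]
Cauchy--Schwarz converts this to an $L^1$ bound:
\[
\max_{A\subseteq V}\Big| \mathrm{P}(X_n\in A) - \frac{|A|}{|V|}\Big| \le \tfrac{1}{2}\|\mu_n - \pi\|_1 \le \tfrac{1}{2}\lambda_\star^{\,n}\,|V|\,.
\]

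The third step is to plug in $n = c\, h(G)^{-2}\log|V|$ and $\lambda_\star \le 1 - c_0 h(G)^2 \le e^{-c_0 h(G)^2}$ to obtain
\[
\tfrac{1}{2}\lambda_\star^{\,n}\,|V| \le \tfrac{1}{2}|V|^{1-c_0 c} \le e^{-\Omega(c)}
\]
(absorbing the logarithmic correction into the implicit constant, which is justified once $c$ exceeds an absolute constant; for smaller $c$ the bound $e^{-O(c)}$ is vacuous). The main difficulty is the directed/non-reversible setting: one cannot apply the classical reversible Cheeger inequality to $P$ directly, and must instead work with $\tilde P$ and verify that uniform is stationary for both $P$ and $\tilde P$ (which follows from $k$-regularity). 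The laziness is also essential: it rules out eigenvalues near $-1$, so the spectral gap governs the absolute spectral gap, and the single quantity $h(G)^2$ controls the mixing.
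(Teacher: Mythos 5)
The paper does not prove this statement at all: Theorem \ref{t:Chung} is quoted as a known result from \cite{Chung} (in the compact form of \cite[Theorem 4.1]{He}), so there is no internal proof to compare against. Your proposal reconstructs the standard argument behind that citation, and its skeleton --- directed Cheeger inequality for the additive reversibilization, $L^2$-contraction of the lazy chain, Cauchy--Schwarz down to total variation, then substitute $n=c\,h(G)^{-2}\log|V|$ --- is the right one and the final computation is correct (modulo the caveat, which you note, that the conclusion only has content once $c$ exceeds an absolute constant).

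The one step you should not wave at is the passage from the spectral gap of $\tilde P=(P+P^*)/2$ to the $L^2$-contraction of the non-normal operator $P'=(I+P)/2$: ``contraction under the self-adjoint part'' is not a valid deduction for a general operator, since the operator norm of $P'$ on $\mathbf{1}^\perp$ is its top singular value, which the eigenvalues of the additive symmetrization do not control (the directed cycle is the standard counterexample: $P$ is an isometry of $L^2(\pi)$, the non-lazy walk is periodic and never mixes, yet $\tilde P$ has a spectral gap). What saves you is precisely the laziness: for $f\perp\mathbf{1}$,
\[
\|P'f\|^2=\tfrac14\left(\|f\|^2+2\langle f,\tilde P f\rangle+\|Pf\|^2\right)\le \tfrac14\left(2\|f\|^2+2\lambda_2(\tilde P)\|f\|^2\right)=\tfrac{1+\lambda_2(\tilde P)}{2}\,\|f\|^2,
\]
using that $P$ (hence $\|Pf\|\le\|f\|$) is an $L^2(\pi)$-contraction because $\pi$ is stationary. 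This yields $\|P'\|_{\mathbf{1}^\perp}\le 1-c_0h(G)^2$ and the rest of your argument goes through verbatim. So the bound is right, but your stated reason for why laziness matters (``rules out eigenvalues near $-1$'') is the reversible-case heuristic; in the directed setting laziness is what makes the additive reversibilization control the operator norm at all. With that computation made explicit, the proof is complete.
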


In our case $G=G(V,E)$ with $V=\F_p^*$ and $x \to y$ iff $y=f_*(x) \gamma^{\pm 1}$.
Thus our task is to estimate the Cheeger constant of $G$. 
Take any $S$, $|S|\le p/2$ and write $S$ as 
the disjoint union  $S =\bigsqcup_{j\in J} G_j$, where $G_j$
are geometric progressions with step $\gamma^2$. 
Here and below we use the fact that $\F_p^*$ is cyclic, isomorphic to $\Z/(p-1)\Z$ and generated by a fixed primitive root $g$.  
Consider 
$z,z\gamma, z\gamma^2$, where 
$z\in S$ is a right endpoint (if it exists) of some $G_j$. Then $z\gamma^2 \in S^c$ 
and $z,z\gamma^2$ are connected with $f^{-1}_* (z\gamma)$. 
The point $f^{-1}_* (z\gamma)$ belongs either $S$ or $S^c$ but in any case we have an edge between $S$ and $S^c$. 
Let $J=J_0 \bigsqcup J_1$, where for $j\in J_0$ the set $G_j$ has no  the right endpoint and $J_1 = J\setminus J_0$.
Clearly, $|J_0| \le 2|S|/\mathrm{ord}(\gamma)$. 
By the argument above 
\begin{equation}\label{f:est_h1}
    2h(G) \ge \frac{|J_1|}{|S|} \ge \frac{|J|}{|S|} - \frac{2}{\mathrm{ord}(\gamma)} \,.
\end{equation}
We want to obtain another lower bound for $h(G)$, which works better in the case when $J$ is small. 
Put $L=|S|/|J|$ and 
let $\omega \in (0,1)$ be a small parameter, which we will choose later.
One has  $\sum_{j\in J} |G_j| =|S|$ and hence $\sum_{j ~:~ |G_j|\ge \omega L} |G_j| \ge (1-\omega) |S|$. 
Splitting $G_j$ up into intervals of length exactly $L_\omega := \omega L/2$, we see that the rest is at most $(1-2\omega) |S|$.
Hence we have obtained some geometric progressions $G'_i$, $i\in I$, having lengths  $L_\omega$ and step $\gamma^2$ and such that  $\sum_{i\in I} |G'_i| \ge (1-2\omega) |S|$.
Put $S' = \bigsqcup_{i\in I} G'_j$ and let $\Omega = S\setminus S'$, $|\Omega| \le 2\omega |S|$. 
In other words, we have 
$S' = XY$, $|S'| = |X||Y|\ge (1-2\omega)|S|$, where 
$X=[1,\gamma^2, \dots, \gamma^{2(L_\omega-1)}]$ and $Y$ is a certain set of multiplicative shifts.  
Clearly, 
\begin{equation}\label{f:h_2}
    2h(G) \ge \frac{e(S,S^c)}{|S|} \ge 1- \frac{e(S,S)}{|S|} \ge 
    1 - 8 \omega - \frac{e(S',S')}{|S|} \,.
\end{equation}
Put $Z= [1,\gamma^2, \dots, \gamma^{2(L'_\omega-1)}]$, where $L'_\omega = [cL_\omega]$, where $c=1/4$. 
We have $|ZZ|< 2|Z|$. 
Also, by the assumption the element $\gamma$ has order  at least 
$\exp(\Omega(\log p/\log \log p))$.
Using Theorem \ref{t:sol_Sidon} with $K=1+c$, $\tilde{K}=2$,   $k\sim 1/\delta$ and taking $\delta \ge C(\log \log p)^{-1}$ for sufficiently large constant $C>0$, we get 
\[
    \frac{e(S',S')}{|S|} - \frac{25 |S'|}{16 p} \ll p^{-\frac{1}{16^{k}}}
    \le \frac{1}{32} \,.
\]
Recalling that $|S'|\le |S| \le p/2$, we derive 
\[
    \frac{e(S',S')}{|S|} \le \frac{25}{32} + \frac{1}{32} = \frac{13}{16} \,.
\]
Substituting the last formula into \eqref{f:h_2},  taking sufficiently large $p$ and choosing $\omega = 2^{-8}$, say, we have $h(G) \ge 1/32$. 
We need to check the only condition of Theorem \ref{t:sol_Sidon}, namely,  $|Z| \ge p^\delta$.
If not, then 
$$
    |S|/|J| = L \ll L_\omega \ll |Z| < p^\delta \sim \exp (O(\log p /\log \log p)) \,,
$$
and hence $|J| \gg |S| \exp (-O(\log p /\log \log p))$.
But then by \eqref{f:est_h1} and our assumption $\mathrm{ord}(\gamma) = \exp(\Omega(\log p/\log \log p))$, we see that in any case $h(G) \gg \exp (-O(\log p /\log \log p))$. Combining the last bound for the Cheeger constant and Theorem \ref{t:Chung}, we derive $n \le \exp (O(\log p /\log \log p))$. 

The last part of Theorem \ref{t:main} follows by the same method, combining with the arguments from \cite{CD} and \cite[Section 4.3]{He}. 
We need to ensure that the bijection $f_* (f^{-1}_* (\cdot) \gamma) :\F_p^* \to \F_p^*$ has the same form as in \eqref{def:f_ab} (with our usual convention that $f_*(-\beta/\alpha) =1/\alpha$ of course).
It can be check via a direct calculation or thanks to the fact that 
$f_*$ corresponds to the standard action of a  lower--triangular matrix in $\mathrm{GL}_2 (\F_p)$. 
This completes the proof of Theorem \ref{t:main}.  
$\hfill\Box$

\begin{remark}
    Consider lazy Markov chain \eqref{f:f_Markov} with $f(x)=x^2$ and $p\equiv 3 \pmod 4$. 
    Using the same argument as in the proof of Theorem \ref{t:main}, we need to have deal with the equation 
    $y+a = f(x+b) = x^2 +2bx +b^2$, where $a,b$ belong to some arithmetic progression $P$ and $x,y$ are from a disjoint union of $J$  arithmetic progressions, see details in \cite{He} (strictly speaking, now  the stationary distribution is not uniform and, moreover, 
    our graph is not regular which requires to have a modification of definition \eqref{def:Cheeger}).
    Then last equation can be interpreted as points/lines incidences with the set of lines $\mathcal{L}$ of the form  $Y=2bX + (b^2-a)$ and the set of points $\mathcal{P} = (y-x^2,x)$. 
    Using the main result from \cite{Vinh} (also, see \cite{s_asymptotic}), we obtain 
\begin{equation}\label{f:Vinh} 
    \left| \mathcal{I} (\mathcal{P}, \mathcal{L}) - \frac{|\mathcal{P}||\mathcal{L}|}{p} \right| \le \sqrt{|\mathcal{P}||\mathcal{L}|p} \,.
\end{equation}
    By formula \eqref{f:Vinh} and the calculations as above (see details in \cite[Section 4.2]{He}) we have an expander if $|S|/J \sim |P| \gg \sqrt{p}$.
    If the last inequality does not holds, then $J\gg |S|/\sqrt{p}$ and by an analogue of formula \eqref{f:est_h1}, we obtain $h(G) \gg 1/\sqrt{p}$. 
    Hence in view of Theorem \ref{t:Chung}, we see that the mixing time is $O(p\log p)$. 
\label{r:f(x)=x^2} 
\end{remark}

The method of the proof of Theorem \ref{t:main} (and see Remark \ref{r:f(x)=x^2}) allows us to produce easily some lazy Markov chains on $\F^*_p$ with the mixing time $O(p\log p)$, e.g., 
\begin{equation}\label{f:ind}
    X_{j+1}=\left\{\begin{array}{ll}
\mathrm{ind} \left(X_{j}\right) \cdot \varepsilon_{j+1} & \text { with probability } 1 / 2\,, \\
X_{j} & \text { with probability } 1 / 2 
\end{array}\right.
\end{equation}
($X_0 \neq 0$) 
or as in \eqref{f:f_Markov} with $f(x)= \exp(x)$, namely, 
\begin{equation}\label{f:exp}
    X_{j+1}=\left\{\begin{array}{ll}
\exp \left(X_{j}\right) + \varepsilon_{j+1} & \text { with probability } 1 / 2\,, \\
X_{j} & \text { with probability } 1 / 2 \,.
\end{array}\right.
\end{equation}
Indeed, in the first chain we arrive to the equation $ya=\mathrm{ind} (x) + \mathrm{ind} (b)$ and 
in the second one to $y+b=\exp(x) \cdot \exp(a)$.
Both equations correspond to points/lines incidences. 
Let us underline one more time that our functions $\mathrm{ind} (x), \exp(x)$ are defined on $\F_p^*$ but not on $\F_p$. 
In reality, one has much better bound for the mixing time of two Markov chains above.

\begin{theorem}
    Let $p$ be a prime number and $\gamma \in \F^*_p$. Then the mixing time of Markov chain \eqref{f:exp}  is     $\exp(O(\log p/\log \log p))$.
    If, in addition, the order of $\gamma$ is 
    $\exp(\Omega(\log p/\log \log p))$, then 
    the mixing time of Markov chain \eqref{f:ind}  is $\exp(O(\log p/\log \log p))$.
\label{t:ind,exp}
\end{theorem}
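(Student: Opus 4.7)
The plan is to follow the template of Theorem \ref{t:main}: use Theorem \ref{t:Chung} to reduce to a Cheeger-constant bound on the one-step transition graph (which is regular because $\exp$ and $\mathrm{ind}$ are bijections on $\F_p^*$), then decompose a test set $S$ with $|S| \le p/2$ into short progressions adapted to the step of $\varepsilon$---arithmetic for chain \eqref{f:exp} and geometric in base $\gamma^2$ for chain \eqref{f:ind}---and run the same dichotomy as in Theorem \ref{t:main}: the endpoint argument \eqref{f:est_h1} covers the case of many short progressions, while the internal edge-count \eqref{f:h_2} is handled by an incidence estimate. The weaker mixing-time bound $O(p\log p)$ noted after Remark \ref{r:f(x)=x^2} uses Vinh's estimate \eqref{f:Vinh}; the improvement to $\exp(O(\log p/\log\log p))$ comes from replacing it by the asymptotic count of Proposition \ref{p:counting_collinear}, exactly parallel to the passage from \eqref{f:Vinh} to Theorem \ref{t:sol_Sidon}.

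For chain \eqref{f:exp}, writing $S' = P + T$ with $P$ a short AP and substituting $U = \exp(t_1)$ turns the edge equation $y + b = \exp(x)\exp(a)$ into a line equation $V = \exp(a)\, U + (\varepsilon - b)$ in the $(U,V)$-plane, i.e., an incidence problem with points in $\exp(T) \times T$ and lines $\mathcal{L} = \{(\exp(a), \varepsilon - b) : a, b \in P,\, \varepsilon \in \supp(\varepsilon)\}$, slopes in the geometric progression $\exp(P)$ and intercepts in an almost-arithmetic progression. For chain \eqref{f:ind}, the multiplicative decomposition $S' = X \cdot Y$ with $X$ a short GP in $\gamma^2$ proceeds exactly as in Theorem \ref{t:main}; rearranging the edge equation to $\mathrm{ind}(x) = a\, y - \mathrm{ind}(b)$ gives lines $\mathcal{L} = \{(a, -\mathrm{ind}(b)) : a, b \in X\}$, slopes in the GP $X$ and intercepts in the AP $-\mathrm{ind}(X)$. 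In both cases, Lemma \ref{l:T_k-E(L)} together with the trivial additive-energy bound $\T^{+}_{2^k}(\mathrm{AP}) \ll |\mathrm{AP}|^{2^k+1}$ yields $\T_{2^k}(\mathcal{L}) \ll |\mathrm{slopes}|^{2^{k+1}-1} \cdot |\mathrm{intercepts}|^{2^k+1}$, saving a factor of roughly $|X|^{2^k - 2}$ over the trivial estimate $|\mathcal{L}|^{2^{k+1}-1}$. Feeding this into Proposition \ref{p:counting_collinear} produces an asymptotic edge-count with error of the form $p^{-c/16^k}$, and the order hypothesis on $\gamma$ in chain \eqref{f:ind} ensures $|X|$ can be taken at least $p^\delta$ with $\delta \gg (\log \log p)^{-1}$, the range in which the argument of Theorem \ref{t:sol_Sidon} operates.

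The main technical obstacle is that, unlike in Theorem \ref{t:sol_Sidon} where the intercepts of $\mathcal{L}$ come from the same small-doubling GP as the slopes and Theorem \ref{t:upper_T(AB)} provides an additional saving of $|Z|^{(k-1)/2}$ on $\T^{+}_{2^k}$, here the intercepts carry only additive structure and we are limited to the trivial energy bound; one must verify that this weaker saving is still enough to make the error term in Proposition \ref{p:counting_collinear} dominated by the main term after raising to the $1/2^{k+2}$-th power. Choosing $k \sim \log p/\log\log p$ so that $|X|^k \sim p$ renders the error $p^{-c/16^k}$ acceptable, and combining the resulting incidence bound with the endpoint dichotomy as in the proof of Theorem \ref{t:main} produces $h(G) \gg \exp(-O(\log p/\log\log p))$; Theorem \ref{t:Chung} then yields the stated mixing-time bound.
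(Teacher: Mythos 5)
Your reduction to a Cheeger-constant bound via Theorem \ref{t:Chung}, the progression decompositions, and the identification of the line sets (geometric-progression slopes, arithmetic-progression intercepts) all match the paper's strategy. But the argument breaks at the one place where a new idea is actually required: the energy estimate for $\mathcal{L}$. You invoke a ``trivial additive-energy bound $\T^{+}_{2^k}(\mathrm{AP}) \ll |\mathrm{AP}|^{2^k+1}$''; this is false. In the paper's normalization $\T^{+}_{m}$ involves $2m$ variables and a single linear constraint, so the trivial bound is $|P|^{2m-1}$, i.e.\ $\T^{+}_{2^k}(P)\le |P|^{2^{k+1}-1}$, and an arithmetic progression attains this up to $k$-dependent constants. (You appear to have substituted the bound $\E^{+}_k(A)\le|A|^{k+1}$, which concerns a different energy with $k-1$ constraints.) With the correct value, Lemma \ref{l:T_k-E(L)} gives $\T_{2^k}(\mathcal{L})\le|\G|^{2^{k+1}-1}|P|^{2^{k+1}-1}=|\mathcal{L}|^{2^{k+1}-1}$, exactly the trivial bound: there is no saving whatsoever, and Proposition \ref{p:counting_collinear} then produces an error term that swamps the main term. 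The paper flags this obstruction explicitly: ``an application of Lemma \ref{l:T_k-E(L)} is useless because $\T^{+}_{2^k}(P)$ is maximal.''

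The missing idea is the paper's passage to $L^{-1}L$: its elements have the form $x\mapsto (g_2/g_1)x+(r_2-r_1)/g_1$, so the relevant intercept set is $W=(P-P)/\G$ rather than $P$ itself. Since $\G$ is a geometric progression with small product set, $W$ has small multiplicative doubling (via the Petridis inequality \eqref{f:Petridis}), and the asymmetric sum--product Theorem \ref{t:upper_T(AB)} then yields $\T^{+}_{2^k}(W)\lesssim_k |W|^{2^{k+1}}/p + |W|^{2^{k+1}-1}|\G|^{-(k-1)/2}$ --- the genuine saving that drives the whole proof. Your own remark that ``the intercepts carry only additive structure'' is precisely why your route cannot close: one must transfer multiplicative structure onto the intercepts, which is what forming $L^{-1}L$ accomplishes. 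A secondary error: the parameter should be $k\sim 1/\delta\sim\log\log p$ (so that $16^{k}\ll\log p$ and $p^{-1/16^{k}}=o(1)$), not $k\sim\log p/\log\log p$, for which $16^k$ is a power of $p$ and the error factor $p^{-1/16^{k}}$ is essentially $1$.
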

\begin{proof} 
    Our arguments
    follow the same scheme as the proofs of Theorem \ref{t:sol_Sidon} and Theorem \ref{t:main}. In both cases we  need to estimate the energy $\T_{2^k}$ of the set of affine transformations $L$ of the form $x\to gx+r$, where coefficients $g\in \G$ and $r\in P$ belongs to a geometric and an arithmetic progression of size $\sqrt{|L|}$, respectively. An application of Lemma \ref{l:T_k-E(L)} is useless because $\T^{+}_{2^k} (P)$ is maximal.
    Nevertheless, we consider the set $L^{-1}L$ and notice that any element of $L^{-1}L$ has the form $x\to g_2/g_1 x + (r_2-r_1)/g_1$, where $g_1,g_2\in \G$, $r_1,r_2 \in P$.
    Now in view of the arguments of Lemma \ref{l:T_k-E(L)}  our task is to estimate $|\G|^{2^{k+1}-1} |P|^{2^{k+1}} \T^{+}_{2^k} (Q/\G)$, where $Q=P-P$.
    Write $W=Q/\G$ and notice that $|Q|<2|P|$. 
    Taking $X\subseteq \G^{-1}$ as in inequality \eqref{f:Petridis} and applying this inequality with $A=\G^{-1}$, $B=\G^{-1}$ and $C=Q$, we see that 
    $$
        |W X| = |Q/\G \cdot X| \le 2|Q/\G| = 2|W| \,.
    $$
    Increasing the constant $2$ to $O(1)$ in the formula above, one can easily assume (or see \cite{TV}) that for a certain $Y$ the following holds $|Y|\ge |\G|/2$. 
    Applying Theorem \ref{t:upper_T(AB)} with $A=W$ and $B=Y$, we obtain 
\[
    \T^{+}_{2^k} (W) \lesssim_k 
    \frac{|W|^{2^{k+1}}}{p}  +
    |W|^{2^{k+1}-1} \cdot |\G|^{-\frac{k-1}{2}}  
    \,.
\]
    Here we need to assume that $|\G| \gtrsim_k 1$. 
    Hence arguing as in Lemma \ref{l:T_k-E(L)} and using the trivial bound $|W| \le |L|$, we get 
\[
    \T_{2^{k+1}} (L) \lesssim_k 
    |\G|^{2^{k+1}-1} |P|^{2^{k+1}}
        \left( \frac{|L|^{2^{k+1}}}{p}  +
    |L|^{2^{k+1}-1} \cdot |\G|^{-\frac{k-1}{2}}  \right) \ll
    |L|^{2^{k+2}} \cdot |\G|^{-\frac{k+5}{2}} \,,
\]
    provided $|L| \gtrsim_k 1$ and $|\G|^{k+3} \ll p^2$. 
    After that we apply the same argument as in the proof of Theorem \ref{t:main}. 
$\hfill\Box$
\end{proof}

\section{Combinatorial  applications}
\label{sec:applications}

We now obtain an application of the developed   technique to Sidon sets and we follow the arguments from  \cite{s_Sidon}.
We need  Lemma 3, Lemma 7 and Theorem 4 from this paper.

\begin{lemma}
	Let $A\subseteq \Gr$ be a set. 
	Then for any $k\ge 2$ one has 
	\begin{equation}\label{f:random_Ek}
		\Sid_{3k-3} (A) \gg \left( \frac{|A|^{2k}}{\E_k (A)} \right)^{1/(2k-1)} \,, 
	\quad \quad \mbox{ and } \quad \quad 
		\Sid_{2k-2} (A) \gg \left( \frac{|A|^{2k}}{\hat{\E}_k (A)} \right)^{1/(2k-1)} \,.
	\end{equation}
	\label{l:random_Ek}
\end{lemma}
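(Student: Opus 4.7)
The plan is a standard random-subset-plus-deletion argument. Fix $p \in (0,1]$ to be chosen and let $B \subseteq A$ be obtained by keeping each element of $A$ independently with probability $p$, so $B$ has expected size $p|A|$. For each non-zero $x$, let $T_x := \{(a,b) \in A^2 : a - b = x\}$; call a collection of $k$ pairwise disjoint pairs (sharing no coordinate) of $T_x$ a \emph{bad $k$-configuration at $x$}, and say it is \emph{activated} if its $2k$ coordinates all lie in $B$. Each bad configuration is activated with probability $p^{2k}$, and the number of candidate configurations at $x$ is at most $\binom{r_{A-A}(x)}{k} \le r_{A-A}(x)^k / k!$, so the total expected number of activated configurations is at most $p^{2k}\E_k(A)/k!$.

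The combinatorial core is the following. On the vertex set $T_x \cap (B \times B)$ build a graph whose edges join two pairs sharing a coordinate. Each element of $A$ appears in at most one pair of $T_x$ as first entry and at most one as second entry, so every vertex of this graph has degree $\le 2$. A greedy independent-set argument then yields an independence number of at least $\lceil r_{B-B}(x)/3 \rceil$; in particular $r_{B-B}(x) \ge 3k-2$ produces $k$ pairwise disjoint pairs, i.e.\ an activated bad configuration. Conversely, if no activated configuration exists then $r_{B-B}(x) \le 3k-3$ for every $x \ne 0$, so the resulting set is $(3k-3)$-Sidon.

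Now I apply the deletion method: remove one element of $B$ per activated configuration to obtain $S$, which by the previous paragraph is $(3k-3)$-Sidon. Its expected size is at least $p|A| - p^{2k}\E_k(A)/k!$, and choosing $p$ so that $p^{2k-1}\E_k(A) \asymp |A|$ balances the two terms, giving $\gg (|A|^{2k}/\E_k(A))^{1/(2k-1)}$ in expectation; some realisation attains this, yielding the first bound of \eqref{f:random_Ek}.

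For the second inequality I run the same scheme with sums: set $T'_x := \{(a,b) \in A^2 : a + b = x\}$ and work with unordered pairs. Since the partner $b = x - a$ is uniquely determined by $a$, any two distinct unordered pairs in $T'_x$ are automatically disjoint, so the associated sharing graph is edgeless and its independence number simply equals the number of unordered pairs. Consequently the threshold for an activated bad $k$-configuration drops to $r_{B+B}(x) \ge 2k-1$, and forbidding it yields a $(2k-2)$-Sidon set; replacing $\E_k(A)$ by $\hat\E_k(A)$ and re-optimising $p$ in the same way finishes the proof. The main step that requires care is precisely this degree bookkeeping of the two sharing graphs (degree $\le 2$ versus degree $0$), which pins down the constants $3k-3$ and $2k-2$.
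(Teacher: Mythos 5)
The paper itself does not prove this lemma; it imports it from \cite{s_Sidon}, so your argument has to stand on its own. Your proof of the \emph{first} bound does: the random sampling at density $p$, the count of at most $\binom{r_{A-A}(x)}{k}\le r_{A-A}(x)^k/k!$ candidate $k$-tuples of disjoint pairs at each $x$, the observation that the sharing graph on $T_x$ has maximum degree $2$ (a pair $(a,b)$ with $a-b=x$ can only conflict with $(b,b-x)$ and $(a+x,a)$), hence independence number at least $\lceil r_{B-B}(x)/3\rceil$, and the deletion step with $p^{2k-1}\asymp |A|/\E_k(A)\le 1$ all check out; $r_{S-S}(x)\ge 3k-2$ does force $k$ disjoint pairs, so the surviving set is $(3k-3)$-Sidon of the claimed size.

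The second bound is where the argument breaks, precisely at the sentence ``forbidding it yields a $(2k-2)$-Sidon set.'' What the sum-configuration argument actually delivers is a set $S$ with $r_{S+S}(x)\le 2k-1$ for every $x$ (even this needs a small repair for the diagonal ``pairs'' $\{a,a\}$ with $2a=x$, which involve one element rather than two). But the paper defines a $g$-Sidon set by $r_{S-S}(z)\le g$ for all $z\ne 0$, and a uniform bound on sum-representations does not imply any bound on difference-representations: if $D$ is a classical Sidon set and $S=D\cup (D+y)$, then $r_{S+S}(x)=O(1)$ for every $x$, so $S$ contains no $k$ pairwise disjoint equal-sum pairs once $k$ exceeds an absolute constant, yet $r_{S-S}(y)=|D|$, so $S$ is very far from being $(2k-2)$-Sidon. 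Equivalently, the implication ``$r_{S-S}(y)\ge 2k-1$ for some $y\ne 0$ implies $S$ contains $k$ disjoint pairs with a common sum'' is false for $k\ge 3$ (it does hold for $k=2$, where $\hat{\E}_2=\E_2$ and three equal differences force an additive quadruple with distinct entries, which is why the classical case works either way). To prove the second estimate with the difference-based definition you would need a forbidden configuration that is counted by $\hat{\E}_k(A)$ \emph{and} is genuinely forced by a popular nonzero difference, and equal-sum $k$-tuples are not such a configuration; alternatively, if the second inequality is meant with a bounded-sum-representation notion of Sidon set, your argument is essentially complete after handling the diagonal terms, but then it does not prove the statement as it is phrased in this paper.
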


\begin{lemma}
	Let $A\subseteq \Gr$ be a set, $A=B+C$, and $k\ge 1$ be an integer.  
	Then
	$$
		\Sid^{}_k (A) \le \min \{ |C| \sqrt{k|B|}  + |B|, |B| \sqrt{k|C|}  + |C| \} \,. 
	$$
\label{l:L_in_sumsets}
\end{lemma}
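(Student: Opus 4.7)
The plan is to prove the first bound $\Sid_k(A) \le |C|\sqrt{k|B|} + |B|$ by a partition argument that plays the size of $B$ (used as an index set) against the size of the difference set $C-C$ (controlled by the Sidon hypothesis); the second bound $|B|\sqrt{k|C|}+|C|$ then follows immediately by swapping the roles of $B$ and $C$.

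Let $S \subseteq A$ be a $k$-Sidon set. For every $s \in S$ fix a representation $s = b(s) + c(s)$ with $b(s) \in B$, $c(s) \in C$, and partition
\[
S = \bigsqcup_{b \in B} T_b\,, \qquad T_b := \{s \in S : b(s) = b\}\,,
\]
so that $\sum_{b} |T_b| = |S|$. The key observation is that if $s, s' \in T_b$ are distinct, then $s - s' = c(s) - c(s')$ is a nonzero element of $C - C$. I would then count ordered pairs of distinct elements lying in a common part: such pairs contribute differences in $(C-C)\setminus\{0\}$, and the $k$-Sidon property of $S$ says that each nonzero value of $\Gr$ is realized as $s - s'$ by at most $k$ ordered pairs from $S \times S$. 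Hence
\[
\sum_{b \in B} \big(|T_b|^2 - |T_b|\big) \;\le\; k \cdot |(C-C)\setminus\{0\}| \;\le\; k|C|^2\,.
\]

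The final step is Cauchy--Schwarz on the partition, which gives
\[
|S|^2 \;=\; \Big(\sum_{b \in B} |T_b|\Big)^2 \;\le\; |B| \sum_{b \in B} |T_b|^2 \;\le\; |B|\big(|S| + k|C|^2\big)\,,
\]
and solving the resulting quadratic inequality in $|S|$ (using $\sqrt{a^2+b^2} \le a + b$ for $a, b \ge 0$) yields $|S| \le |B| + |C|\sqrt{k|B|}$, which is the claimed bound. There is no serious obstacle here; the only subtlety is the careful bookkeeping in the Sidon-based estimate, specifically remembering to subtract the diagonal $\sum_b |T_b| = |S|$ before invoking the $k$-solution bound, so that one only counts distinct pairs and obtains the clean $k|C|^2$ rather than an extra $|S|$ factor.
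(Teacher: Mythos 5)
Your proof is correct: the partition of a $k$-Sidon subset $S\subseteq B+C$ by the $B$-coordinate, the observation that pairs within a part have differences confined to $(C-C)\setminus\{0\}$, and the Cauchy--Schwarz plus quadratic-inequality finish all go through, and this is essentially the argument behind Lemma~\ref{l:L_in_sumsets} (the paper itself imports the statement from \cite{s_Sidon} without reproving it). The symmetric bound by exchanging the roles of $B$ and $C$ is also handled correctly.
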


\begin{theorem}
	Let $A\subseteq \Gr$ be a set, $\delta, \eps \in (0,1]$ be parameters, $\eps \le \delta$.\\
	$1)~$ Then there is $k=k(\d, \eps) = \exp(O(\eps^{-1} \log (1/\d)))$ such that either $\E^{}_k (A) \le |A|^{k+\delta}$ or there is $H\subseteq \Gr$, $|H| \gtrsim  |A|^{\delta(1-\eps)}$, $|H+H| \ll |A|^\eps |H|$ 
	and there exists $Z\subseteq \Gr$,  $|Z| |H| \ll |A|^{1+\eps}$ 
	with 
	$$|(H\dotplus Z) \cap A| \gg |A|^{1-\eps} \,.$$ 
	$2)~$ Similarly, either there is a set $A'\subseteq A$, $|A'| \gg |A|^{1-\eps}$ and $P\subseteq \Gr$, $|P| \gtrsim |A|^\d$ such that for all $x\in A'$ one has  $r_{A-P}(x) \gg |P| |A|^{-\eps}$ or $\E_k (A) \le |A|^{k+\d}$ with $k \ll 1/\eps$.
\label{t:Ek} 
\end{theorem}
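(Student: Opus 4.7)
The plan is to treat Theorem~\ref{t:Ek} as a higher-energy analogue of Balog--Szemer\'edi--Gowers: either the $k$-th energy $\E_k(A)$ lies below the trivial threshold $|A|^{k+\delta}$, or $A$ contains a quantitatively structured piece. The argument proceeds via repeated dyadic pigeonholing on the level sets of $r_{A-A}$, combined with the Pl\"unnecke--Ruzsa inequality \eqref{f:Pl-R} to control the doubling of the extracted sets.

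I would first dispatch part 2) directly. Assume $\E_k(A) > |A|^{k+\delta}$ for some $k$ to be chosen of order $\eps^{-1}$. Writing $\E_k(A) = \sum_\alpha r_{A-A}(\alpha)^k$ and dyadically pigeonholing, there is a level $\Delta$ and a set $P := \{\alpha : r_{A-A}(\alpha) \in [\Delta, 2\Delta)\}$ with $|P|\Delta^k \gtrsim \E_k(A)$. The trivial bounds $\Delta \le |A|$ and $|P| \le |A-A| \le |A|^2$ give $|P| \gtrsim |A|^\delta$ and $\Delta \gtrsim |A|^{1-(2-\delta)/k}$, so fixing $k$ as a sufficiently large multiple of $\eps^{-1}$ forces $\Delta \gtrsim |A|^{1-\eps}$. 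A first-moment computation then yields
\[
\sum_{x \in A} r_{A-P}(x) \;=\; \sum_{p \in P} r_{A-A}(p) \;\gtrsim\; |P|\Delta \;\gtrsim\; |P|\,|A|^{1-\eps},
\]
and discarding the $x \in A$ with $r_{A-P}(x) < c|P||A|^{-\eps}$ loses at most a constant fraction of the right-hand side. Using $r_{A-P}(x) \le |P|$, the surviving $A' \sbeq A$ has cardinality $\gtrsim \Delta \gg |A|^{1-\eps}$ and satisfies the required pointwise lower bound.

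Part 1) is then obtained by iteration. One sets up a nested chain of popular-difference sets $P_1, P_2, \dots$, each extracted from the previous one via an application (or a symmetric ``popular sum'' variant) of part 2), and uses \eqref{f:Pl-R} and \eqref{f:Petridis} to control the doubling of each intermediate set in terms of that of the previous one. A careful bookkeeping shows that after roughly $m \sim \eps^{-1}\log(1/\delta)$ iterations one reaches a set $H$ with $|H+H| \ll |A|^\eps |H|$ and $|H| \gtrsim |A|^{\delta(1-\eps)}$; the order $k$ of the required energy roughly doubles at each iteration, which accounts for the final bound $k = \exp(O(\eps^{-1}\log(1/\delta)))$. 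The Ruzsa covering lemma applied to $H$ inside $A$ then produces the small cover $Z$ with $|Z|\,|H| \ll |A|^{1+\eps}$ and $|(H \dotplus Z)\cap A| \gg |A|^{1-\eps}$.

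The principal technical obstacle is the bookkeeping across iterations: one must guarantee that the doubling constant of each intermediate popular-difference set is majorised by that of the previous one, and that the multiplicative losses of order $|A|^\eps$ per step accumulate only to $|A|^{O(\eps)}$ and not exponentially worse. Keeping both the size lower bound $|A|^{\delta(1-\eps)}$ and the doubling upper bound $|A|^\eps$ consistent throughout the chain is the delicate point of the Schoen--Shkredov higher-energy refinement method, and is the step where this seemingly innocuous dichotomy acquires its full technical weight.
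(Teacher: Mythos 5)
You should first be aware that the paper contains no proof of this statement to compare against: Theorem \ref{t:Ek} is imported verbatim from \cite{s_Sidon} (it is one of the results listed as ``Lemma 3, Lemma 7 and Theorem 4 from this paper''), so your proposal can only be judged on its own terms. On those terms, your part 2) is essentially complete and correct: dyadic pigeonholing on $\sum_\alpha r^k_{A-A}(\alpha) > |A|^{k+\delta}$ with $k$ of order $1/\eps$ does force a level set $P$ with $|P|\gtrsim |A|^{\delta}$ and popularity $\Delta\gtrsim |A|^{1-\eps}$, and the first-moment pruning is standard; the only care needed is to place the pruning threshold at $\Delta|P|/(2|A|)$ rather than at $c|P||A|^{-\eps}$, so that the discarded mass is at most half of $|P|\Delta$ even after the logarithmic losses hidden in $\gtrsim$.

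Part 1), however, is a plan rather than a proof, and the gap sits exactly where you yourself locate the ``full technical weight''. Three things are asserted but not supplied. (i) The mechanism by which a popular-difference set becomes a set $H$ with doubling as small as $|A|^{\eps}$: the natural tool you gesture at, a Balog--Szemer\'edi--Gowers-type extraction, loses fixed powers of the density and cannot yield a doubling constant $|A|^{\eps}$ for arbitrarily small $\eps$; the higher-energy method instead takes $H$ to be an intersection $A\cap(A+\alpha_1)\cap\dots\cap(A+\alpha_s)$ of popular translates and controls $|H+H|$ by comparing $\E_{2k}(A)$ with $\E_k(A)^2/|A|^{k}$, the point being that geometric growth of the energies can occur only $O(\eps^{-1}\log(1/\delta))$ times before hitting the trivial bound $|A|^{k+1}$ --- this dichotomy, which is the engine of the theorem and the true source of $k=\exp(O(\eps^{-1}\log(1/\d)))$, is absent from your sketch. (ii) The covering step: Ruzsa covering yields $|Z|\le |A'+H|/|H|$ and a cover by translates of $H-H$, not a \emph{direct} sum $H\dotplus Z$ capturing $\gg|A|^{1-\eps}$ elements of $A$ with $|Z||H|\ll|A|^{1+\eps}$; for that one needs both $|A'+H|\ll|A|^{1+O(\eps)}$ for a large $A'\subseteq A$ (which uses that $H$ is built from translates of $A$, not merely that $H$ has small doubling) and a disjointification of the translates. (iii) The actual bookkeeping of the iteration. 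Since these are precisely the nontrivial parts of the statement, the proposal cannot be accepted as a proof of part 1).
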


To have deal with the real setting we need the famous Szemer\'edi--Trotter Theorem \cite{ST}.

\begin{theorem}
    Let $\mathcal{P}$, $\mathcal{L}$ be finite sets of points and lines in $\R^2$. Then 
$$
    \mathcal{I} (\mathcal{P}, \mathcal{L}) \ll  (|\mathcal{P}||\mathcal{L}|)^{2/3} + |\mathcal{P}| + |\mathcal{L}| \,.
$$
\label{t:SzT}
\end{theorem}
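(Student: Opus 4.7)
The plan is to prove Theorem \ref{t:SzT} via Sz\'ekely's argument using the Crossing Number Inequality applied to a suitable topological graph drawn in $\R^2$.

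First, I would dispose of the trivial contribution. Any line $\ell \in \mathcal{L}$ incident to at most one point of $\mathcal{P}$ contributes at most $|\mathcal{L}|$ in total to $\mathcal{I}(\mathcal{P},\mathcal{L})$, which is absorbed in the additive term. For each remaining line I would order its incident points along $\ell$ and join consecutive pairs by an arc running along $\ell$. This produces a topological graph $G$ drawn in $\R^2$ with $n := |\mathcal{P}|$ vertices and $e := \mathcal{I}(\mathcal{P},\mathcal{L}) - |\mathcal{L}|$ edges (the contribution of a line with $k_\ell \ge 2$ incidences being $k_\ell - 1$ edges).

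Next, I would bound the crossing number $\mathrm{cr}(G)$ in two different ways. On one hand, two edges arising from two distinct lines of $\mathcal{L}$ can meet only at the unique intersection point of those lines, so
\[
\mathrm{cr}(G) \le \binom{|\mathcal{L}|}{2} < \frac{|\mathcal{L}|^2}{2} \,.
\]
On the other hand, the Crossing Number Inequality of Ajtai--Chv\'atal--Newborn--Szemer\'edi asserts that any drawing of a simple graph with $e \ge 4n$ satisfies $\mathrm{cr}(G) \ge e^3/(64 n^2)$. Comparing these estimates in the regime $e \ge 4n$ yields $e^3 \le 32 n^2 |\mathcal{L}|^2$, whence $\mathcal{I}(\mathcal{P},\mathcal{L}) \ll (|\mathcal{P}||\mathcal{L}|)^{2/3} + |\mathcal{L}|$. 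In the complementary regime $e < 4n$ we obtain directly $\mathcal{I}(\mathcal{P},\mathcal{L}) < 4|\mathcal{P}| + |\mathcal{L}|$, which also fits the stated bound.

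The main obstacle is the Crossing Number Inequality itself, which is the sole nontrivial input. Its standard proof starts from the Euler--formula consequence $\mathrm{cr}(G) \ge e - 3n$ valid for $e \ge 3n$ and then amplifies this linear bound to the cubic one via a probabilistic deletion argument, retaining each vertex independently with a carefully chosen probability and optimizing. Once the cubic crossing bound is available, the remainder of the Szemer\'edi--Trotter deduction above is purely combinatorial.
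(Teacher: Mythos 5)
The paper does not prove Theorem \ref{t:SzT} at all: it is imported as a classical black box with a citation to Szemer\'edi and Trotter, whose original argument proceeds by cell decomposition. Your proof is the standard Sz\'ekely crossing-number argument, and it is correct: the reduction to the graph of consecutive incident points, the upper bound $\mathrm{cr}(G)\le\binom{|\mathcal{L}|}{2}$ (edges along a common line are disjoint, and edges on two distinct lines meet at most once), the case split at $e\ge 4n$, and the appeal to the Ajtai--Chv\'atal--Newborn--Szemer\'edi inequality all fit together as you describe. Two small points worth tightening if you write it out in full: the edge count should be stated as the inequality $e\ge \mathcal{I}(\mathcal{P},\mathcal{L})-|\mathcal{L}|$ rather than an equality (the deficit is the number of lines carrying at least one incidence, which is at most $|\mathcal{L}|$, and the inequality goes in the direction you need); and one should observe that the graph is simple because two distinct points determine a unique line, so no pair of consecutive points is joined twice --- simplicity is a hypothesis of the crossing-number inequality. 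What your route buys over the original is brevity and self-containedness modulo the crossing lemma, whose probabilistic amplification of the Euler bound $\mathrm{cr}(G)\ge e-3n$ you correctly identify as the only nontrivial input; what it does not buy is anything usable in $\F_p$, which is why the paper must rely on the entirely different incidence machinery of Proposition \ref{p:counting_collinear} in the positive-characteristic setting and reserves Theorem \ref{t:SzT} for the real case only.
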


    Now we are ready to prove Theorem  \ref{t:Sidon_intr}. 
	Take any $\d<1/2$, e.g., $\d=1/4$ and let $\eps \le \d/4$ be a parameter, which we will choose later. 
	In view of Lemma \ref{l:random_Ek} we see that $\E^{\times}_k (A) \le |A|^{k+\d}$ implies 
\begin{equation}\label{tmp:08.03_2} 
	\Sid^{\times}_{3k-3} (A) \gg |A|^{\frac{1}{2} + \frac{1-2\d}{2(2k-1)}} = |A|^{\frac{1}{2} + \frac{1}{4(2k-1)}}
\end{equation} 
	and we are done. 
	Here $k=k(\eps)$. 
	Otherwise there is $H\subseteq \F$, $|H| \gtrsim  |A|^{\d(1-\eps)} \ge |A|^{\d/2}$, $|HH| \ll |A|^{\eps} |H|$ 
	and there exists $Z\subseteq \F$, $|Z| |H| \ll |A|^{1+\eps}$ with  $|(H\cdot Z) \cap A| \gg |A|^{1-\eps} \,.$  Here the product of $H$ and $Z$ is direct. 
	Put $A_* = (H\cdot Z) \cap A$, $|A_*| \gg  |A|^{1-\eps}$ and we want to estimate $\E^{\times}_{l+1} (A_* +1)$ or $\hat{\E}_{l+1}^{\times} (A_* +1)$ for large $l$. 
	After that having a good upper bound for $\E^{\times}_{l+1} (A_* +1)$ or $\hat{\E}_{l+1}^{\times} (A_* +1)$,  we apply  Lemma \ref{l:random_Ek} again to find large multiplicative Sidon subset of $A_*$.

	First of all, notice that in view of \eqref{f:Pl-R}, one has
	$$
	|H A^{-1}_*| \le |HH^{-1}||Z| \ll |A|^{2\eps} |H||Z| \ll |A|^{1+3\eps} \,.
	$$
	In other words, the set $A^{-1}_*$ almost does not grow after the multiplication with $H$.
	Let $Q = H A^{-1}_*$, $|Q| \ll   |A|^{1+3\eps}$ and also let $M=|A|^{\eps}$. 
	Secondly, fix any $\lambda \neq 0, 1$. 
	The number of the solutions to the equation $a_1 / a_2 = \lambda$, where $a_1,a_2 \in A_*+1$ does not exceed 
	$$
		\sigma_\la := |H|^{-2} |\{ h_1,h_2\in H,\, q_1,q_2\in Q ~:~ (h_1/q_1 + 1) / (h_2/q_2 +1) = \lambda \}| \,.
	$$ 
	The last equation  has form \eqref{f:lines}, namely, 
    $$
        \frac{h_1}{q_1} - \frac{\lambda h_2}{q_2} = \lambda - 1
    $$
    and can be interpreted as a question about the number of incidences between points and lines.
    For each $\lambda \neq 0,1$ the quantity $\sigma_\lambda$ can be estimated as
	\begin{equation}\label{f:sigma_la}
		\sigma_\la \ll |H|^{-2} \cdot |Q| |H|^{2-\kappa} \ll |A|^{1+3\eps} |H|^{-\kappa} 
	\end{equation}
	similarly 
	to 
	the proof of Theorem \ref{t:sol_Sidon} above (in the case $\F=\R$ the same is true thanks to Theorem \ref{t:SzT}).
	Here $\kappa = \kappa(\d)>0$.
	Indeed, by our assumption $|A|<\sqrt{p}$, Theorem \ref{t:upper_T(AB)}, Proposition  \ref{p:counting_collinear} and Lemma \ref{l:T_k-E(L)}, we have 
	\begin{equation}\label{f:sigma_la'}
	    \sigma_\la - \frac{|Q|^2}{p} \lesssim  |Q| |H|^{-1/2} 
	    (|Q| \T^{+}_{2^r} (H))^{1/2^{r+2}}
	    \lesssim  |Q| \sqrt{M} \left( M^3 |A| |H|^{-\frac{r+1}{2}} \right)^{1/2^{r+2}}
	\end{equation} 
	provided $|H| \gtrsim_r M^{2^{r+1}}$ and $|H|^{r+1} \ll p$. Here $r$ is a parameter and we take $r\sim 1/\d$ to satisfy the second condition.
	To have the first condition just take $\eps {2^{r+1}} \ll \d$ (in other words,  $\eps \le \exp(-\Omega(1/\d))$) and we are done because 
	$|H| \gg |A|^{\d/2}$.

	Further 
	using 
	$|H| \gg |A|^{\d/2}$, $|A_*| \gg |A|^{1-\eps}$ and 
	choosing 
	any $\eps \le  \d \kappa/100$, we obtain after some calculations and formula \eqref{f:sigma_la} that $\sigma_\la  \ll  |A_*|^{1-\d \kappa/4}$.
	Hence taking sufficiently large $l \gg (\d \kappa)^{-1}$, we derive 
\[
	\hat{\E}_{l+1}^{\times} (A_*) = 
	\sum_{\lambda} r^{l+1}_{A_* A_*} (\lambda) \ll  |A_*|^{l+1} + (|A_*|^{1-\d \kappa/2})^l |A_*|^2 \ll |A_*|^{l+1} + |A|^{l+2-\d\kappa l/2} \ll |A_*|^{l+1}\,.
\]
	Applying Lemma \ref{l:random_Ek} and choosing  $\eps \ll l^{-1}$,   we see that 
	$$
		\Sid^\times_{2l} (A) \ge \Sid^\times_{2l} (A_*)
		\gg	
		|A_*|^{\frac{l+1}{2l+1}}
		\gg
		|A|^{\frac{(1-\eps)(l+1)}{2l+1}}
		= 
		|A|^{\frac{1}{2} + \frac{1-2\eps(l+1)}{2(2l+1)}} \gg |A|^{\frac{1}{2}+c} \,, 
	$$
	where $c = c(\d) >0$ is an absolute constant. 
	We have obtained bound \eqref{f:Sidon_intr} of Theorem \ref{t:Sidon_intr}.


    As for estimate \eqref{f:Sidon_intr_new1.5}, we use the same argument as above but now our analogue of the quantity $\sigma_\lambda$ is  
    $\exp(q_1) \exp(h_1) - \exp(q_2) \exp(h_2) = \lambda$, where $q_1,q_2 \in Q=A_*+H$, $h_1, h_2\in H$ (we use the notation above). 
    The last equation can be treated as  points/lines incidences with the set of lines 
    $x \exp(h_1) - y \exp(h_2) = \lambda$, $|\mathcal{L}| = |H|^2$ and the correspondent set of points $\mathcal{P}$ of size $|Q|^2$. Then analogues of bounds \eqref{f:sigma_la}, \eqref{f:sigma_la'} take place and we are done. 
\bigskip

It remains to obtain estimate \eqref{f:Sidon_intr_new2} of
the theorem.
For any sets $X_1,X_2,X_3$ consider the set $R[X_1,X_2,X_3]$
$$
    R[X_1,X_2,X_3] = \left\{ \frac{x_1-x_3}{x_2-x_3} ~:~ x_1,x_2,x_3 \in X,\, x_2 \neq x_3 \right\} \,.
$$
If $X_1=X_2=X_3=X$, then we put $R[X_1,X_2,X_3] = R[X]$. 
One can check that $1-R[X_1,X_2,X_3] = R[X_1,X_3,X_2]$. 
For $\F = \R$ or $\F=\F_p$ 
we put $X=P$, $A=R[X]$, 
where $P = \{1,\dots, n \}$, $\bar{P} = \{-n, \dots, n \}$ and let $n<\sqrt{p}$ in the case of $\F_p$. 
Then $A$ is contained  in $\bar{P} / \bar{P} := B \cdot C$  and in view of Lemma \ref{l:L_in_sumsets} any multiplicative $k$--Sidon   subset of $A$ has size at most $O (\sqrt{k} |A|^{3/4})$
because as one can check $|A| \gg |P|^2$. 
Further $1-A=A$ and hence the same argument is applicable for the set $1-A$. It remains to notice that $\Sid^\times (X) = \Sid^\times (-X)$ for any set $X$. 
Finally, let us make a remark that there is 
an alternative (but may be a little bit harder) way to obtain estimate \eqref{f:Sidon_intr_new2}. 
Indeed,  consider $R[\G]$, where $\G\subseteq \F_p^*$,  $|\G|< \sqrt{p}$ is a multiplicative subgroup (we consider the case $\F=\F_p$, say). 
One can notice that $R[\G] = (\G-1)/(\G-1)$ and repeat the argument above.


\bigskip

\noindent{I.D.~Shkredov\\
Steklov Mathematical Institute,\\
ul. Gubkina, 8, Moscow, Russia, 119991}
\\
and
\\
IITP RAS,  \\
Bolshoy Karetny per. 19, Moscow, Russia, 127994\\
and 
\\
MIPT, \\ 
Institutskii per. 9, Dolgoprudnii, Russia, 141701\\
{\tt ilya.shkredov@gmail.com}

\end{document}